\documentclass[11pt]{article}
\usepackage{graphicx}
\usepackage{amsthm, amsmath, amssymb, tikz, bm}
\usepackage{mathtools}
\usepackage{xifthen}

\usepackage[margin=1.2in]{geometry} 

\usepackage[shortlabels]{enumitem}
\usepackage{todonotes}
\usepackage[colorlinks=true,
linkcolor=blue,citecolor=blue,
urlcolor=blue]{hyperref}

\allowdisplaybreaks

\usetikzlibrary{calc,shapes, backgrounds}

\newtheorem{theorem}{Theorem}
\newtheorem{lemma}[theorem]{Lemma}
\newtheorem{corollary}[theorem]{Corollary}

\newtheorem{claim}[theorem]{Claim}
\newtheorem{remark}[theorem]{Remark}
\newtheorem{conjecture}[theorem]{Conjecture}
\newtheorem{proposition}[theorem]{Proposition}

\newcommand{\ds}{\displaystyle}
\newcommand{\dss}{\displaystyle\sum}

\newcommand{\lp}{\left (}
\newcommand{\rp}{\right )}

\newcommand{\cF}{\mathcal{F}}
\newcommand{\cH}{\mathcal{H}}

\DeclarePairedDelimiter{\ceil}{\lceil}{\rceil}

\DeclarePairedDelimiter{\abs}{\lvert}{\rvert}%

\newcommand{\norm}[1]{\left\lVert#1\right\rVert}

\title{On Hamiltonian Berge cycles in $3$-uniform hypergraphs}

\author{
Linyuan Lu
\thanks{University of South Carolina, Columbia, SC 29208,
({\tt lu@math.sc.edu}). This author was supported in part by NSF grant DMS-1600811.} \and
Zhiyu Wang \thanks{University of South Carolina, Columbia, SC 29208,
({\tt zhiyuw@math.sc.edu}).} 
}

\begin{document}

\maketitle

\begin{abstract}
    Given a set $R$, a hypergraph is $R$-uniform if the size of every hyperedge belongs to $R$.
    A hypergraph $\mathcal{H}$ is called \textit{covering} if every vertex pair is contained in some hyperedge in $\mathcal{H}$. In this note, we show that every covering $[3]$-uniform hypergraph on $n\geq 6$ vertices contains a Berge cycle $C_s$ for any $3\leq s\leq n$. As an application, we determine the maximum Lagrangian of $k$-uniform Berge-$C_{t}$-free hypergraphs and Berge-$P_{t}$-free hypergraphs.
\end{abstract}

\section{Introduction}
A \textit{hypergraph} is a pair $\cH=(V,E)$ where $V$ is a vertex set and every \textit{hyperedge} $h \in E$ is a subset of $V$. For a fixed set of positive integers $R$, we say $\cH$ is an \textit{$R$-uniform} hypergraph, or \textit{$R$-graph} for short, if the cardinality of each hyperedge belongs to $R$. 
 If $R=\{k\}$, then an $R$-graph is simply a $k$-uniform hypergraph or a $k$-graph. In this paper, we assume all hypergraphs are \textit{simple}, i.e., $E$ does not have two identical hyperedges and each hyperedge does not contain a vertex more than once. 
Given an $R$-graph $\cH = (V,E)$ and a set $S \in \binom{V}{s}$, let deg$(S)$ denote the number of edges containing $S$ and $\delta_s(\cH)$ be the minimum \textit{$s$-degree} of $\cH$, i.e., the minimum of deg$(S)$ over all $s$-element sets $S \in \binom{V}{s}$. Given a hypergraph $\cH$, the \textit{$2$-shadow} of $\cH$, denoted by $\partial(\cH)$, is a simple $2$-uniform graph $G=(V,E)$ such that $V(G)=V(\cH)$ and $uv\in E(G)$ if and only if $\{u,v\}\subseteq h$ for some $h\in E(\cH)$. In this paper, since we are dealing with $3$-uniform hypergraphs, for convenience we will simply use the term \textit{shadow} instead of $2$-shadow. we say $\cH$ is {\em covering} if the shadow of $\cH$ is a complete graph. Note that $\cH$ is covering if and only if $\delta_2(\cH) \geq 1$.

There are several notions of a path or a cycle in hypergraphs. For $t\geq 2$, a \textit{Berge path} $P$ of length $t-1$, denoted by Berge-$P_t$, is a collection of $t$ vertices $v_1, \ldots, v_{t}$ and $t-1$ distinct hyperedges $h_1, \ldots, h_{t-1}$ such that $\{v_i, v_{i+1}\} \subseteq h_i$ for each $i\in [t-1]$. Similarly, for $t\geq 3$, a \textit{Berge} cycle $C$ of length $t$, denoted by Berge-$C_t$, is a collection of $t$ distinct vertices $v_1, v_2, \ldots, v_t$  and $t$ distinct hyperedges $h_1, h_2, \ldots, h_t$ such that $\{v_i, v_{i+1}\} \subseteq h_i$ for every $i\in [t]$ where $v_{t+1} \equiv v_1$. 
The vertices $v_1, v_2 \cdots, v_t$ are called the \textit{base vertices} of $P$ and $C$. Moreover, we refer $h_i$ as the hyperedge \textit{embedding} $v_i v_{i+1}$.
For convenience, we also use $P = v_1 v_2 \cdots v_{t}$ and $C = v_1 v_2 \cdots v_t$ to denote a Berge path of length $t-1$ and a Berge cycle of length $t$ with base vertices $\{v_1, \cdots,v_t\}$. We say an $R$-graph $\cH$ on $n$ vertices contains a \textit{Hamiltonian} Berge cycle (path) if it contains a Berge cycle (path) of length $n$ (or $n-1$). 
More generally, Gerbner and Palmer \cite{Gerbner-Palmer17} extended the definition of Berge paths and Berge cycles to general graphs. In particular, given  a simple graph $G$, a hypergraph $\cH$ is called a \emph{Berge-$G$} hypergraph if there is
an injection $i\colon V(G)\to V(\mathcal{H})$ and 
a bijection $f\colon E(G) \to E(\mathcal{H})$ such that for all $e=uv \in E(G)$, we have $\{i(u), i(v)\} \subseteq f(e)$.

For $k$-uniform hypergraphs, there are more structured notions of Berge cycles as well. Given  $1\leq \ell < k$, a $k$-graph $C$ is called an $\ell$-cycle if its vertices can be ordered cyclically such that each of its edges consists of $k$ consecutive vertices and every two consecutive edges (in the natural order of the edges) share exactly $\ell$ vertices. In particular, in a $k$-graph, a $(k-1)$-cycle is often called a \textit{tight cycle} while a $1$-cycle is often called a \textit{loose cycle}. A $k$-graph contains a \textit{Hamiltonian $\ell$-cycle} if it contains an $\ell$-cycle as a spanning subhypergraph. 

The problem of finding Hamiltonian cycles has been widely studied. In 1952, Dirac \cite{Dirac52} showed that for $n\geq 3$, every $n$-vertex graph with minimum degree  at least $n/2$ contains a Hamiltonian cycle. Since then, problems that relate the minimum degree (or minimum $s$-degree in hypergraphs) to the structure of the (hyper)graphs are often referred to as \textit{Dirac-type problems}. In the setting of hypergraphs, define the threshold $h^{\ell}_s(k,n)$ as the smallest integer $m$ such that every $k$-graph $\cH$ on $n$ vertices with $\delta_s(\cH) \geq m$ contains a Hamiltonian $\ell$-cycle, provided that $k-\ell$ divides $n$. These thresholds for different values of $s$, $\ell$ and $k$ have been intensively studied in a series of papers (e.g., \cite{Katona-Kierstead99, RRE06, RRE08, RRE11, Treglown-Zhao12, Markstrom-Rucinski11, KMO10, Han-Zhao15}, see \cite{Zhao16} for a recent survey). For Berge cycles, Bermond, Germa, Heydemann, and Sotteau \cite{BGHS78} showed a Dirac-type theorem for Berge cycles. Kostochka, Luo and Zirlin \cite{KLZ19} recently showed some Dirac-type conditions for a hypergraph with few edges to be Hamiltonian.

The problem of finding Hamiltonian Berge cycles in a hypergraph is closely related to the problem of finding rainbow Hamiltonian cycles in an edge-colored complete graph $K_n$. 
An edge-colored graph $G$ is \textit{rainbow} (or \textit{multicolored}) if each edge is of a different color. An edge-colored graph $G$ is \textit{$k$-bounded} if no color appears in more than $k$ edges.
Observe that given any covering $k$-graph $\cH$ with hyperedges $h_1, \cdots, h_m$, we can construct an edge-colored complete graph $G$ (using colors $\{c_1, \cdots, c_m\}$) on $|V(\cH)|$ vertices by assigning any edge $uv \in E(G)$ color $c_i$ if $uv \in h_i$ for some $i$ (pick arbitrarily if $uv$ is contained in multiple hyperedges). Notice that $G$ is $\binom{k}{2}$-bounded. Moreover, any rainbow subgraph $G'$ of $G$ corresponds to a Berge-$G'$ in $\cH$ by embedding $uv \in E(G')$ into the hyperedge $h_i$ if $uv$ is colored $c_i$.

There have been intensive investigations on the largest $k$ (compared to $n$) such that any $k$-bounded edge-coloring of $K_n$ contains a rainbow Hamiltonian path or cycle. In this framework, Hahn \cite{Hahn80} conjectured that any $(n/2)$-bounded coloring of $K_n$ contains a rainbow Hamiltonian path. Hahn's conjecture was disproved by Maamoun and Meyniel \cite{Maamoun-Meyniel84} who showed that the conjecture is not true for proper colorings of $K_{2^t}$ for integers $t \geq 2$.
The problem for rainbow Hamilton cycles was first mentioned in Erd\H{o}s, Nesdtril and R\"{o}dl \cite{ENR83} as an Erd\H{o}s-Stein problem and show that $k$ can be any constant. 
Hahn and Thomassen \cite{Hahn-Thomassen86} showed that $k$ could grow as fast as $n^{1/3}$ and conjectured that the growth rate of $k$ can be linear. R\"{o}dl and Winkler later in an unpublished work improved it to $n^{1/2}$. Frieze and Reed \cite{Frieze-Reed93} improved it to $O(n/\ln n)$. Albert, Frieze and Reed \cite{AFR95} confirmed the conjecture of Hahn and Thomassen by showing that if $n$ is sufficiently large and $k$ is at most $\ceil{cn}$ where $c<\frac{1}{32}$, then any $k$-bounded edge-coloring of $K_n$ contains a rainbow Hamiltonian cycle. Frieze and Krivelevich \cite{Frieze-Krivelevich08} showed that there exists absolute constant $c > 0$ such that if an edge-coloring of $K_n$ is $cn$-bounded, then there exists rainbow cycles of all sizes $3 \leq \ell \leq n$. In the context of Berge Hamiltonian cycles, the results above imply the following theorem:

\begin{theorem}\cite{ENR83, Hahn-Thomassen86, Frieze-Reed93, AFR95}\label{thm:cover-cycles}
For any fixed set of integers $R\subseteq [k]$ where $k \geq 2$, there is an integer $n_0:=n_0(k)$ such that every covering $R$-graph $\cH$ on at least $n_0$ vertices contains Berge cycles of all sizes $3\leq \ell \leq n$. 
\end{theorem}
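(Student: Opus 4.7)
The plan is to reduce the Berge cycle question to a question about rainbow cycles in bounded edge-colorings of $K_n$, and then invoke the theorem of Frieze and Krivelevich~\cite{Frieze-Krivelevich08} as a black box. The reduction is exactly the one sketched immediately before the theorem statement, so the proof is really just a matter of making that sketch precise and verifying that the parameters line up.

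First, given a covering $R$-graph $\cH$ on $n$ vertices with hyperedges $h_1,\ldots,h_m$, I would define an edge-coloring of $K_n = K_{V(\cH)}$ by assigning to each pair $uv$ a color $c_i$, where $h_i$ is \emph{some} hyperedge containing $\{u,v\}$ (choose arbitrarily if several qualify). Such an $h_i$ exists for every pair because $\cH$ is covering. Since each hyperedge has size at most $k$, each color class has size at most $\binom{k}{2}$, so the resulting coloring is $\binom{k}{2}$-bounded.

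Next, let $c>0$ be the absolute constant from the Frieze--Krivelevich theorem~\cite{Frieze-Krivelevich08}, which guarantees that every $cn$-bounded edge-coloring of $K_n$ contains a rainbow cycle of every length $\ell$ with $3\leq \ell\leq n$. Set
\[
  n_0(k) \;:=\; \max\!\left\{\,\left\lceil \tfrac{1}{c}\binom{k}{2} \right\rceil,\; n^*\,\right\},
\]
where $n^*$ is the threshold beyond which the Frieze--Krivelevich theorem applies. For $n\geq n_0(k)$ we have $\binom{k}{2}\leq cn$, so our coloring is $cn$-bounded and hence contains rainbow $\ell$-cycles for every $3\leq \ell\leq n$. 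Finally, any rainbow $\ell$-cycle $v_1v_2\cdots v_\ell$ in $K_n$ uses $\ell$ distinct colors $c_{i_1},\ldots,c_{i_\ell}$; the corresponding hyperedges $h_{i_1},\ldots,h_{i_\ell}$ of $\cH$ are therefore distinct, and embedding $v_jv_{j+1}$ into $h_{i_j}$ produces a Berge-$C_\ell$ in $\cH$.

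The main ``obstacle'' is that essentially all of the real work is being outsourced: the nontrivial content is the Frieze--Krivelevich theorem itself, whose proof is a delicate probabilistic/structural argument. If one wanted a self-contained proof of Theorem~\ref{thm:cover-cycles}, the hard step would be reproving that rainbow-cycles result; the reduction above is mechanical. I would also note in passing that hyperedges of size $1$ in $R$ can simply be ignored, since they cover no pairs and thus never participate as color classes on $K_n$.
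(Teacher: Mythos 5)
Your proposal is correct and is essentially the argument the paper intends: Theorem~\ref{thm:cover-cycles} is stated as a cited consequence of the rainbow-cycle literature, and the reduction from a covering $R$-graph to a $\binom{k}{2}$-bounded edge-coloring of $K_n$ (with rainbow subgraphs pulling back to Berge subhypergraphs) is exactly the one sketched in the paragraph preceding the theorem. Your only deviation is sourcing the all-lengths claim from Frieze--Krivelevich rather than the Hamiltonian-cycle references the paper lists, which is if anything the cleaner citation for the ``all sizes $3\leq \ell\leq n$'' conclusion.
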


\begin{corollary}\label{cor:cover-path}
For any fixed set of integers $R\subseteq [k]$ where $k \geq 2$, there is an integer $n_0:=n_0(k)$ such that every covering $R$-graph $\cH$ on at least $n_0$ vertices contains a Berge Hamiltonian path.
\end{corollary}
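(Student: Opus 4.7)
The plan is immediate: deduce Corollary \ref{cor:cover-path} from Theorem \ref{thm:cover-cycles} using the fact that a Berge Hamiltonian path is nothing more than a Berge Hamiltonian cycle with one hyperedge deleted. Given a covering $R$-graph $\cH$ on $n \geq n_0(k)$ vertices, I would first apply Theorem \ref{thm:cover-cycles} with $\ell = n$ to obtain a Berge Hamiltonian cycle $C = v_1 v_2 \cdots v_n$ on $n$ base vertices, with $n$ distinct embedding hyperedges $h_1, h_2, \ldots, h_n$ satisfying $\{v_i, v_{i+1}\} \subseteq h_i$ for every $i \in [n]$, where indices are taken modulo $n$ (so $v_{n+1} \equiv v_1$).

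Next I would simply discard the hyperedge $h_n$ that embeds the pair $v_n v_1$. What remains is the sequence of $n$ distinct vertices $v_1, v_2, \ldots, v_n$ together with $n-1$ distinct hyperedges $h_1, h_2, \ldots, h_{n-1}$ satisfying $\{v_i, v_{i+1}\} \subseteq h_i$ for every $i \in [n-1]$, which is exactly the definition of a Berge path of length $n-1$ on $n$ base vertices, i.e., a Berge Hamiltonian path in $\cH$. The same threshold $n_0 = n_0(k)$ provided by Theorem \ref{thm:cover-cycles} works for the corollary, and there is no real obstacle here: the statement is a one-line consequence of the theorem, since the only structural requirement for a Berge path is the existence of distinct embedding hyperedges along a vertex sequence, and the Hamiltonian cycle supplies one more embedding hyperedge than a Hamiltonian path needs.
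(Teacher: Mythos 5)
Your proposal is correct and matches the intended argument: the paper states the corollary without an explicit proof precisely because it is the immediate deduction you give, namely that the Hamiltonian Berge cycle supplied by Theorem \ref{thm:cover-cycles} (with $\ell = n$) yields a Hamiltonian Berge path after discarding one embedding hyperedge. No issues.
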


Further results on the rainbow spanning subgraphs lead to results that are even stronger than Theorem \ref{thm:cover-cycles}. In particular, B\"{o}ttcher, Kohayakawa and Procacci \cite{BKP12} showed that for $c\leq n/(51\Delta^2)$ every $cn$-bounded $K_n$ contains a rainbow copy of every graph with maximum degree $\Delta$. Recently, Coulson and Perarnau \cite{Coulson-Perarnau18} showed that there exists $c >0$ such that if $G$ is a Dirac graph (i.e. minimum degree at least $n/2$) on $n$ vertices (for sufficiently large $n$), then any $cn$-bounded coloring of $G$ contains a rainbow Hamiltonian cycle. 

The results above assume $n$ is sufficiently large with respect to $k$. In this paper, we prove more precise results and focus on the Hamiltonian Berge paths and cycle problems in $[3]$-uniform hypergraphs (i.e., all hyperedges have cardinality at most $3$). In particular, we show the following theorems: 

\begin{theorem}\label{thm:3-path}
Every covering $[3]$-graph $\cH$ on $n \geq 4$ vertices contains a Hamiltonian Berge path.
\end{theorem}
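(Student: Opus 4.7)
The plan is to argue by contradiction via a longest Berge path. Take $P = v_1 v_2 \cdots v_t$ a longest Berge path in $\cH$ with distinct embedding hyperedges $h_1, \ldots, h_{t-1}$, and assume $t < n$; fix $u \notin V(P)$. The core observation I would prove first is that every hyperedge $h$ containing $\{u, v_1\}$ forces $h = h_1 = \{v_1, v_2, u\}$: if $h \notin \{h_1, \ldots, h_{t-1}\}$, then prepending $u$ to $P$ along $h$ produces a strictly longer Berge path, contradicting maximality; otherwise $h = h_j$ contains $\{v_j, v_{j+1}, u, v_1\}$, so $|h_j| \le 3$ together with $u \notin V(P)$ forces $v_1 \in \{v_j, v_{j+1}\}$, hence $j = 1$ and $h_1 = \{v_1, v_2, u\}$. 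By the symmetric argument at the other endpoint, $h_{t-1} = \{v_{t-1}, v_t, u\}$. A first consequence is that $u$ is unique: a second $u' \notin V(P)$ would likewise force $h_1 = \{v_1, v_2, u'\}$, collapsing $u = u'$. So $t = n - 1$ and $V(\cH) = V(P) \cup \{u\}$.

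Next I would apply the covering hypothesis to the pair $\{v_1, v_t\}$, picking a hyperedge $g \supseteq \{v_1, v_t\}$, and verify that $g$ lies outside $\{h_1, \ldots, h_{t-1}\}$. For indices $2 \le j \le t-2$, having $\{v_1, v_t\} \subseteq h_j$ would require the four distinct vertices $v_j, v_{j+1}, v_1, v_t$ to fit in a hyperedge of size at most $3$, which is impossible; and for $j \in \{1, t-1\}$ the explicit forms $h_1 = \{v_1, v_2, u\}$ and $h_{t-1} = \{v_{t-1}, v_t, u\}$ combined with $t = n-1 \ge 3$ (since $n \ge 4$) rule out $v_t \in h_1$ and $v_1 \in h_{t-1}$ respectively.

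To finish, I would exhibit the Berge path $u, v_2, v_3, \ldots, v_t, v_1$ on all $n$ vertices, embedded by $h_1, h_2, \ldots, h_{t-1}, g$ in order: $\{u, v_2\} \subseteq h_1$ by the structural formula, $\{v_i, v_{i+1}\} \subseteq h_i$ for $2 \le i \le t-1$ exactly as in $P$, and $\{v_t, v_1\} \subseteq g$. These $t$ hyperedges are pairwise distinct (the $h_i$ are distinct by assumption, and $g$ is new), so this is a valid Berge path on $n$ vertices, contradicting the assumed maximality with $t < n$. Hence $t = n$ and $P$ itself is a Hamiltonian Berge path.

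I expect the most delicate step to be the structural lemma pinning down $h_1 = \{v_1, v_2, u\}$ and the analogous verification that $g$ avoids all path hyperedges: both amount to pigeonholing four named vertices into a hyperedge of size at most three while using that $u$ is external to $P$, and one has to be careful about the endpoint indices $j \in \{1, t-1\}$ where the bookkeeping is tightest. Once those two facts are in hand, the final assembly of the longer Berge path is routine verification.
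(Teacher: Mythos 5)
Your proposal is correct and follows essentially the same route as the paper's proof: maximality of a longest Berge path forces any hyperedge covering $\{u,v_1\}$ (resp.\ $\{u,v_t\}$) to be the endpoint hyperedge $\{v_1,v_2,u\}$ (resp.\ $\{v_{t-1},v_t,u\}$), a hyperedge covering $\{v_1,v_t\}$ is consequently free, and rerouting produces a longer Berge path. Your additional observations (uniqueness of $u$ and $t=n-1$) are correct but not needed; otherwise the argument, including the final reassembled path, matches the paper's.
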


\begin{theorem}\label{thm:3-cycle}
Every covering $[3]$-graph $\cH$ on $n \geq 6$ contains a Berge cycle $C_s$ for any $3\leq s \leq n$.
\end{theorem}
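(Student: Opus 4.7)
I will argue by downward induction on the cycle length $s$, starting at $s=n$. The key elementary observation throughout is that every hyperedge has size at most $3$, so if a hyperedge $h$ contains two given pairs $\{x,y\}$ and $\{u,v\}$ then $|\{x,y,u,v\}|\le 3$: the two pairs must share a vertex. This \emph{overlap principle} tightly restricts how the embedding hyperedges of a Berge path or cycle can accommodate extra pairs.

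\emph{Hamiltonian case ($s=n$).} Apply Theorem~\ref{thm:3-path} to obtain a Hamiltonian Berge path $v_1v_2\cdots v_n$ with embedding hyperedges $e_1,\dots,e_{n-1}$. Since $\cH$ is covering, pick any hyperedge $f\supseteq\{v_1,v_n\}$; if $f\notin\{e_1,\dots,e_{n-1}\}$ we close the path into $C_n$. Otherwise $f=e_i$, and the overlap principle forces $i\in\{1,n-1\}$ with $e_i\in\{\{v_1,v_2,v_n\},\{v_{n-1},v_n,v_1\}\}$. I then reassign $e_i$ to embed $\{v_n,v_1\}$ and hunt for a replacement hyperedge for the newly uncovered adjacent pair; the overlap principle shows that any obstruction merely shifts the hole one step along the path, so iterating (with P\'osa-style rotations of the path endpoints when necessary, for which $n\ge 6$ guarantees enough interior room) terminates in a Berge $C_n$.

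\emph{Shortening step.} Assume inductively that $\cH$ contains a Berge $C_{s+1}=v_1v_2\cdots v_{s+1}$ with hyperedges $h_1,\dots,h_{s+1}$. To build $C_s$ I try to delete some $v_i$ and embed the shortcut pair $\{v_{i-1},v_{i+1}\}$ in a hyperedge $f$ with $f\notin\{h_j:j\in[s+1]\setminus\{i-1,i\}\}$. This succeeds whenever some covering hyperedge of $\{v_{i-1},v_{i+1}\}$ lies outside the cycle, or coincides with $h_{i-1}$ or $h_i$ (which by the overlap principle forces it to be the triangular 3-edge $\{v_{i-1},v_i,v_{i+1}\}$). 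By the same principle, the only other cycle hyperedges that can obstruct the attempt at index $i$ are $h_{i-2}=\{v_{i-2},v_{i-1},v_{i+1}\}$ and $h_{i+1}=\{v_{i-1},v_{i+1},v_{i+2}\}$.

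\emph{Main obstacle.} The crux is the simultaneous bad case in which the shortening fails at every index $i$; each diagonal pair is then covered only by one of the two very rigid triangular forms above, yielding an essentially unique global configuration on the cycle's hyperedges. I plan to derive a contradiction by combining the bad-case constraints at two cyclically close indices and showing that they either duplicate a cycle hyperedge (violating simplicity of $\cH$) or are incompatible with $|h|\le 3$. When $s+1<n$ there is an outside vertex $v^*$, and the covering hypothesis supplies an extra hyperedge on $\{v_{i-1},v_{i+1}\}$ through $v^*$ that bypasses the forbidden set; this disposes of all $s<n-1$. The bad case is therefore delicate only when $s+1=n$, which is precisely where the hypothesis $n\ge 6$ buys enough indices to locate a pair of colliding constraints. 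The small base cases $s=3,4$ are checked directly, again exploiting $n\ge 6$ to supply spare vertices that carry fresh covering hyperedges.
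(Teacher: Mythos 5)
Your proposal has two genuine gaps, both at the places where the real work happens. First, the Hamiltonian case. Your plan is to close the Hamiltonian Berge path from Theorem~\ref{thm:3-path} and, when the closing hyperedge is already used, to ``shift the hole one step along the path'' and iterate with rotations. This is not justified, and the difficulty it glosses over is exactly where the paper spends most of its effort: after the first reassignment one obtains only a Berge cycle of length $n-1$ plus a leftover vertex $w$, and there is no local argument that the obstruction keeps moving and eventually disappears --- note that a covering $[3]$-graph on $5$ vertices with no Hamiltonian Berge cycle exists (Figure~\ref{fig:counters}(b)), so any correct argument must use $n\ge 6$ globally, not merely as ``interior room.'' The paper's Lemma~\ref{lem:3-Hamiltonian-cycle} instead chooses, among all $(n-1)$-cycles, one maximizing the number of edges whose embedding hyperedge contains $w$, proves a bridge lemma (Claim~\ref{cl:two-red-one-free}) and counting consequences (C1)--(C5), reduces to five explicit colorings, and settles $n\in\{6,7,8,10\}$ by hand. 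None of that is replaced by your iteration.

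Second, the shortening step. Your local analysis of which cycle hyperedges can obstruct the diagonal $\{v_{i-1},v_{i+1}\}$ is correct, but your resolution of the ``simultaneous bad case'' is only announced, not carried out, and the announced strategy is wrong in two ways. (i) When $s+1<n$ you claim the covering hypothesis supplies a hyperedge on $\{v_{i-1},v_{i+1}\}$ \emph{through an outside vertex} $v^*$; covering only guarantees \emph{some} hyperedge on that pair, with no control over its third vertex, so this does not bypass the forbidden set. (ii) The bad case does not in general yield a contradiction: in the paper's Claim~\ref{cl:Berge-C5}, Case~2, the constraints at all six indices are mutually consistent and produce a concrete configuration of six hyperedges; the resolution is to exhibit a $C_5$ by a nontrivial re-routing ($v_2v_5v_6v_3v_4$), not to show the configuration cannot exist. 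You would face the same issue. Incidentally, the paper avoids your downward induction for all $6\le s\le n$ at once by taking traces: $\cH_S$ is covering for every $S$ with $|S|\ge 6$, so Lemma~\ref{lem:3-Hamiltonian-cycle} applied to $\cH_S$ lifts to a Berge $C_{|S|}$ in $\cH$ (Proposition~\ref{prop:cover}); only $s=3,4,5$ need ad hoc arguments. Adopting that reduction would shrink your shortening step to two cases, but the Hamiltonian case still needs a genuine proof.
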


Note that in Theorem \ref{thm:3-cycle} the lower bound on $n$ is best possible since the hypergraph containing only the $3$-uniform hyperedges shown in Figure \ref{fig:counters}(b) is a covering $[3]$-graph on $5$ vertices with only $4$ hyperedges, thus without a Hamiltonian Berge cycle. 


\begin{figure}[htb]
\begin{center}
    \begin{minipage}{0.3\textwidth}
        \resizebox{2.5cm}{!}{\begin{tikzpicture}
        \tikzstyle{vertex}=[circle,fill=black,inner sep=1pt]

        \node[style=vertex,label=above:{$v_{1}$}] (v1) at (1, 0) {};
        \node[style=vertex,label=above:{$v_{2}$}] (v2) at (-1, 0) {};
        \node[style=vertex,label=below:{$v_{3}$}] (v3) at (0, -1) {};
        \node[style=vertex,label=above:{$w$}] (w) at (0, 1) {};

        \fill[red, opacity=0.4] (0,1)--(1,0)--(-1,0)--cycle;

        \draw (v2) -- (v3);
        \draw (v3) -- (v1);
        \draw (v3) -- (w);
        
\end{tikzpicture}}
    \end{minipage} \hspace{0.1cm}
    \begin{minipage}{0.3\textwidth}
        \resizebox{2.5cm}{!}{\begin{tikzpicture}
        \tikzstyle{vertex}=[circle,fill=black,inner sep=1pt]

        \node[style=vertex,label=above:{$v_{1}$}] (v1) at (1, 1) {};
        \node[style=vertex,label=above:{$v_2$}] (v2) at (0, 1) {};
        \node[style=vertex,label=above:{$v_{3}$}] (v3) at (0, 0) {};
        \node[style=vertex,label=below:{$v_{4}$}] (v4) at (1, 0) {};
        \node[style=vertex,label=above:{$v_{5}$}] (v5) at (1.5, 0.5) {};

        \fill[red, opacity=0.4] (1, 1)--(0, 1)--(1.5, 0.5)--cycle;
        \fill[blue, opacity=0.4] (0, 0)--(1, 0)--(1.5, 0.5)--cycle;
        \fill[green, opacity=0.4] (1, 1)--(0, 1)--(0, 0)--cycle;
        \fill[yellow, opacity=0.4] (1, 1)--(0, 1)--(1, 0)--cycle;
        
        \draw (v1) -- (v2);

\end{tikzpicture}}
    \end{minipage} 
    \caption{Two hypergraphs on $n$ vertices with $n$ hyperedges without Hamiltonian cycles.}
    \label{fig:counters}

\end{center}
\end{figure}
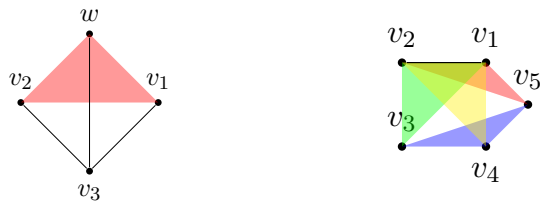

\begin{remark}\label{rmk:small-counter}
In fact, except the two hypergraphs in Figure \ref{fig:counters}, all covering $\{2,3\}$-graphs on $n$ vertices with at least $m\geq 3$ hyperedges contain a Berge cycle of length $s$ where $3 \leq s \leq \min(m,n)$. Observe that both hypergraphs in Figure \ref{fig:counters} contain $2$-uniform hyperedges. 
\end{remark}

Based on the observation in Remark \ref{rmk:small-counter}, we conjecture the following:
\begin{conjecture}
For all $k\geq 4$, every $k$-uniform covering hypergraph on $n$ vertices with at least $m\geq 3$ hyperedges contains a Berge cycle of length $s$ for $3 \leq s\leq \min(m,n)$.
\end{conjecture}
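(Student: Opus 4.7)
My plan is to reduce the $k$-uniform ($k \geq 4$) problem to the already-established $[3]$-uniform result. Given a $k$-uniform covering hypergraph $\cH$ with hyperedges $h_1, \dots, h_m$, I would select for each $h_i$ a $3$-subset $h'_i \subseteq h_i$ such that the resulting $3$-graph $\cH' = \{h'_1, \dots, h'_m\}$ is still covering and has $|\cH'| \geq \min(m,n)$ distinct members. Because $h'_i \subseteq h_i$, every Berge $C_s$ of $\cH'$ pulls back to a Berge $C_s$ of $\cH$, so Theorem~\ref{thm:3-cycle} together with Remark~\ref{rmk:small-counter} then yields Berge cycles of all lengths $3 \leq s \leq \min(m,n)$.

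\textbf{Key step.} The central question is whether such a selection is always possible, i.e., whether one can choose the $h'_i$ so that for every pair $\{u,v\}$ at least one $h'_j$ contains both $u$ and $v$. For large $n$ I would sidestep the selection by translating $\cH$ into a $\binom{k}{2}$-bounded edge-coloring of $K_n$ (coloring each pair by an arbitrary containing hyperedge) and invoking the Frieze--Krivelevich rainbow-cycle theorem, which for $n \geq n_0(k)$ produces rainbow cycles of every length $3 \leq s \leq \min(m,n)$ and hence the required Berge cycles. For the remaining small-$n$ range, I would view the selection problem as a system of distinct representatives in the bipartite incidence structure between pairs of $V(\cH)$ and $3$-subsets of hyperedges. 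A uniformly random $3$-subset of each $h_i$ covers any fixed pair $\{u,v\} \subseteq h_i$ with probability $6/(k(k-1))$, so pairs of high covering degree are handled routinely, reducing the residual task to pairs that lie in only few hyperedges.

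\textbf{Main obstacle.} The hard part is when some $h_i$ is the unique hyperedge containing several pairs whose union spans more than $3$ of its vertices: no single $3$-subset of $h_i$ then preserves all of those pairs, and the naive reduction breaks down. I expect such configurations to force the covering inequality $m\binom{k}{2} \geq \binom{n}{2}$ to be nearly tight and hence to arise only in a small, classifiable family of extremal examples, in the spirit of those in Figure~\ref{fig:counters}. Handling them will likely require a finite case analysis exploiting the fact that each $k$-hyperedge induces a $K_k$ in the shadow, giving many short cycles to work with directly; the goal would then be to build Berge cycles of each required length by hand rather than through the $[3]$-uniform reduction.
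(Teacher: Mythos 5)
The statement you are addressing is not a theorem of the paper: the authors state it as an open conjecture (motivated by Remark~\ref{rmk:small-counter}, which itself is asserted without proof) and offer no argument for it. So the only question is whether your proposal closes the gap, and it does not --- it is a plan whose central step is exactly the unresolved difficulty. The large-$n$ part of your plan (translate to a $\binom{k}{2}$-bounded colouring of $K_n$ and apply Frieze--Krivelevich) is already Theorem~\ref{thm:cover-cycles} of the paper; the content of the conjecture lies entirely in the remaining finite range of $n$ and in the regime $m<n$, and there your argument is incomplete.

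Two concrete obstructions to the reduction you propose. First, a counting obstruction: a covering $3$-graph on $n$ vertices must have at least $\binom{n}{2}/3$ hyperedges, whereas a covering $k$-graph needs only about $\binom{n}{2}/\binom{k}{2}$. Whenever $m<\binom{n}{2}/3$ --- which happens for a wide range of $(k,n,m)$ with $k\ge 4$ --- no choice of one $3$-subset per hyperedge can yield a covering $3$-graph, so the reduction to Theorem~\ref{thm:3-cycle} is impossible for trivial reasons, not merely for the delicate configurations you single out. Second, your claim that the bad configurations (a hyperedge uniquely covering two disjoint pairs spanning more than three of its vertices) force $m\binom{k}{2}\ge\binom{n}{2}$ to be nearly tight is false: take $h_0=\{1,2,3,4\}$ to be the only hyperedge containing $\{1,2\}$ and the only one containing $\{3,4\}$, and then add arbitrarily many further hyperedges avoiding these two pairs while covering everything else; $m$ can be made as large as you like. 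So the exceptional family is not small or classifiable by the mechanism you describe, and the ``finite case analysis'' you defer to is not carried out. As it stands the proposal proves nothing beyond what Theorems~\ref{thm:cover-cycles} and~\ref{thm:3-cycle} already give, and the conjecture remains open.
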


As an application, using Theorem \ref{thm:cover-cycles}, \ref{thm:3-path} and \ref{thm:3-cycle}, we determine the maximum \textit{Lagrangian} of Berge-$P_t$-free and Berge-$C_{t}$-free $k$-graphs when $t$ is sufficiently large. 
Given a $k$-uniform hypergraph $\cH$ on $n$ vertices, the polynomial form $P_{\cH}(\bm{x}):\mathbb{R}^n \to \mathbb{R}$ is defined for any vector $\bm{x}= (x_1, \ldots, x_n) \in \mathbb{R}^n$ as 
$$P_{\cH}(\bm{x}) = \dss_{\{i_1, i_2, \cdots, i_k\} \in E(\cH)} x_{i_1} \cdots x_{i_k}.$$
For $k\geq 2$,
the \textit{Lagrangian} of a $k$-uniform hypergraph $\cH = (V,E)$ on $n$ vertices is defined to be 
$$\lambda(\cH) = \ds\max_{\bm{x}\in \mathbb{R}_{\ge 0}^n: \norm{\bm{x}}_1 = 1}  P_{\cH}(\bm{x}).$$
where the $\norm{\bm{x}}_1 =  \dss_{i=1}^n \abs{x_i}$ is the \textit{1-norm} of $\bm{x} \in \mathbb{R}^n$.
Lagrangians for graphs (i.e., $2$-graphs) were introduced by Motzkin and Straus in 1965 \cite{Motzkin-Straus65}. They showed $\lambda(G)=\frac{1}{2}(1-\frac{1}{\omega(G)})$,
where $\omega(G)$ is the clique number of $G$.
The Lagrangian of a $k$-graph $\cH$ is closely related to the maximum edge density of the blow-up of $\cH$, which is frequently used in extremal hypergraph theory \cite{Talbot02, Keevash11}.

There have been intensive studies on the hypergraph Tur\'an numbers of Berge paths and cycles, which are concerned with the maximum number of hyperedges in a $k$-uniform hypergraph without a Berge path or cycle of certain length (see for example \cite{GKL10, DGMT18, Gyori-Lemons12, Gyori-Lemons12b, Kostochka-Luo18, FKL19, FKL19B, EGMSTZ19, GLSZ18}). Here we consider the maximum Lagrangian of Berge-$C_t$-free and Berge-$P_{t}$-free hypergraphs respectively.

\begin{theorem}\label{thm: max-Lagrangian-cycle}
For fixed $k \geq2 $ and sufficiently large $t=t(k)$ and $n \geq t-1$, let $\cH$ be a $k$-uniform hypergraph on $n$ vertices without a Berge cycle of length $t$. Then 
$$\lambda(\cH) \leq \lambda(K^k_{t-1}) = \frac{1}{(t-1)^{k}} \binom{t-1}{k}.$$
\end{theorem}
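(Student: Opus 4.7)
The plan is to combine a standard Lagrangian support-reduction argument with Theorem~\ref{thm:cover-cycles}. Let $\bm{x}^*$ with $\norm{\bm{x}^*}_1 = 1$ attain $P_{\cH}(\bm{x}^*) = \lambda(\cH)$, and, among all such optimizers, choose one whose support $S := \{i : x^*_i > 0\}$ has minimum cardinality. I will first argue that $\cH[S]$ is a covering $k$-graph. Suppose, towards a contradiction, that some pair $i,j \in S$ lies in no hyperedge of $\cH$. Since $P_{\cH}$ is multilinear and no edge involves both $x_i$ and $x_j$, the single-variable function $f(t) := P_{\cH}(x^*_1,\ldots,x^*_i + t,\ldots,x^*_j - t,\ldots,x^*_n)$ is affine. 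Its maximum on $[-x^*_i, x^*_j]$ is thus attained at an endpoint where $x_i$ or $x_j$ becomes $0$, yielding an optimal weighting whose support is strictly contained in $S$. This contradicts the minimality of $|S|$, so every pair of vertices of $S$ must lie in some hyperedge of $\cH$, i.e., $\cH[S]$ is covering.

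Next, I would choose $t = t(k)$ to be at least the threshold $n_0(k)$ from Theorem~\ref{thm:cover-cycles}. If $|S| \geq t$, then $\cH[S]$ is a covering $k$-graph on at least $n_0(k)$ vertices, so Theorem~\ref{thm:cover-cycles} produces a Berge cycle of length $t$ in $\cH[S] \subseteq \cH$, contradicting the Berge-$C_t$-freeness of $\cH$. Hence $|S| \leq t-1$. Since $\cH[S] \subseteq K^k_{|S|}$ and $\bm{x}^*$ is supported in $S$,
\[
\lambda(\cH) = P_{\cH[S]}(\bm{x}^*|_S) \leq P_{K^k_{|S|}}(\bm{x}^*|_S) \leq \lambda(K^k_{|S|}) \leq \lambda(K^k_{t-1}) = \frac{1}{(t-1)^k}\binom{t-1}{k},
\]
where the last inequality uses the (easily verified) monotonicity of $\lambda(K^k_m) = \binom{m}{k}/m^k$ in $m$.

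The main obstacle is the shifting step: one must ensure the $t$-shift preserves feasibility \emph{and} strictly reduces the support. Both requirements are met because $x^*_i, x^*_j > 0$ (so the interval $[-x^*_i, x^*_j]$ is nondegenerate, making at least one endpoint zero out a coordinate while keeping the other nonnegative), and affinity of $f$ guarantees at least one endpoint matches the value $\lambda(\cH)$. Once this covering reduction is in place, Theorem~\ref{thm:cover-cycles} does the structural heavy lifting and the monotonicity of $\lambda(K^k_m)$ closes the proof; the assumption $n \geq t-1$ is only needed so that the target bound $\lambda(K^k_{t-1})$ is meaningful relative to the ambient hypergraph size.
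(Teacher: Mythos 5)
Your proof is correct and follows essentially the same route as the paper's: a minimum-support optimizer, the covering property of the induced subhypergraph on the support (which the paper imports from Frankl--R\"odl as Lemma~\ref{lem:Lagrangian-cover} and you reprove via the standard affine-shift argument), Theorem~\ref{thm:cover-cycles} to force $|S|\le t-1$, and finally the value $\lambda(K^k_{t-1})=\binom{t-1}{k}/(t-1)^k$ (which the paper derives by an explicit smoothing to the uniform weighting and you quote as known, together with the trivial monotonicity $\lambda(K^k_{|S|})\le\lambda(K^k_{t-1})$). One small wording point: in the covering step the contradiction hypothesis should be that $i,j$ lie in no hyperedge of $\cH[S]$ (no common edge \emph{contained in} $S$), not merely no hyperedge of $\cH$; your affinity argument for $f$ works verbatim under this weaker hypothesis, since every edge meeting $V\setminus S$ contributes $0$ to $f$ anyway, and it is this stronger covering conclusion for $\cH[S]$ that you need in order to apply Theorem~\ref{thm:cover-cycles}.
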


As a corollary, we obtain the same results for the Berge-$P_t$-free hypergraphs as well.

\begin{corollary}\label{cor: max-Lagrangian-path}
For fixed $k \geq2 $ and sufficiently large $t=t(k)$ and $n \geq t-1$, let $\cH$ be a $k$-uniform hypergraph on $n$ vertices without a Berge-$P_t$. Then 
$$\lambda(\cH) \leq \lambda(K^k_{t-1}) = \frac{1}{(t-1)^{k}} \binom{t-1}{k}.$$
\end{corollary}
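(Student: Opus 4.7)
The plan is to deduce the corollary directly from Theorem \ref{thm: max-Lagrangian-cycle} via a one-line containment argument. Specifically, I would show that every Berge-$C_t$ contains a Berge-$P_t$ as a sub-Berge structure, so the class of Berge-$P_t$-free hypergraphs is contained in the class of Berge-$C_t$-free hypergraphs, and the bound for the latter transfers automatically.

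To make this precise, suppose $C = v_1 v_2 \cdots v_t$ is a Berge-$C_t$ in a hypergraph $\cH$ with distinct base vertices $v_1, \ldots, v_t$ and distinct hyperedges $h_1, \ldots, h_t$ satisfying $\{v_i, v_{i+1}\} \subseteq h_i$ for each $i \in [t]$ (indices taken modulo $t$). Then discarding the single hyperedge $h_t$ leaves the $t$ distinct vertices $v_1, \ldots, v_t$ together with the $t-1$ distinct hyperedges $h_1, \ldots, h_{t-1}$ such that $\{v_i, v_{i+1}\} \subseteq h_i$ for every $i \in [t-1]$. By the definition of a Berge path, this is exactly a Berge-$P_t$.

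Consequently, any $k$-uniform hypergraph $\cH$ on $n \geq t-1$ vertices that is Berge-$P_t$-free must also be Berge-$C_t$-free. Applying Theorem \ref{thm: max-Lagrangian-cycle} with the same parameters $k$ and $t$ then immediately yields
$$\lambda(\cH) \leq \lambda(K^k_{t-1}) = \frac{1}{(t-1)^{k}} \binom{t-1}{k},$$
which is the desired bound. There is no real obstacle: the entire content of the corollary is the observation that removing one hyperedge from a Berge cycle of length $t$ produces a Berge path on the same $t$ base vertices, so the cycle version subsumes the path version for free.
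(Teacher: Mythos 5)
Your argument is correct and matches the paper's (implicit) derivation: the paper presents this as an immediate corollary of Theorem \ref{thm: max-Lagrangian-cycle}, and the intended justification is exactly your observation that deleting one hyperedge from a Berge-$C_t$ yields a Berge-$P_t$ on the same base vertices, so Berge-$P_t$-free implies Berge-$C_t$-free.
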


Both the bounds in Theorem \ref{thm: max-Lagrangian-cycle} and Corollary \ref{cor: max-Lagrangian-path} are tight. Indeed, let $\cH$ be a $k$-graph obtained from $K^k_{t-1}$ by adding $(n-t+1)$ isolated vertices. Clearly $\cH$ is Berge-$C_t$-free and Berge-$P_t$-free and $\lambda(\cH) = \binom{t-1}{k}/(t-1)^k$. For $k = 3$, due to Theorem \ref{thm:3-path} and Theorem \ref{thm:3-cycle}, we obtain more precise results.
\begin{corollary}\label{cor:3-cycle-Lagrangian}
Let $\cH$ be a $3$-uniform hypergraph on $n$ vertices without a Berge-$C_t$ where $n\geq t\geq 6$. Then 
$$\lambda(\cH) \leq\lambda(K^3_{t-1}) = \frac{1}{(t-1)^{3}} \binom{t-1}{3}.$$
\end{corollary}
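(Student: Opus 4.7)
The plan is to combine the classical minimum-support argument for Lagrangians with Theorem~\ref{thm:3-cycle}. Let $\bm{x} \in \mathbb{R}_{\ge 0}^{V(\cH)}$, with $\norm{\bm{x}}_1 = 1$ and $P_\cH(\bm{x}) = \lambda(\cH)$, be an optimum whose support $S := \{v : x_v > 0\}$ has minimum cardinality among all such optima. The conclusion is trivial when $\lambda(\cH) = 0$, so we may assume $|S| \ge 3$.

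First I would show that $\cH[S]$ (the subhypergraph of hyperedges of $\cH$ contained in $S$) is covering. Fix distinct $i, j \in S$. Viewing $P_\cH(\bm{x})$ as a polynomial in $x_i$ and $x_j$ with the remaining coordinates frozen, one gets
\[
P_\cH(\bm{x}) = \alpha + \beta x_i + \gamma x_j + \delta\, x_i x_j, \qquad \delta = \dss_{w \in S:\, \{i,j,w\} \in E(\cH)} x_w,
\]
since any hyperedge $\{i,j,v\}$ with $v \notin S$ contributes $0$ at $\bm{x}$. If no hyperedge of $\cH[S]$ contains $\{i,j\}$, then $\delta = 0$ and $P_\cH$ is linear along the segment where $x_i + x_j$ is held constant; its maximum on this segment is attained at an endpoint, producing another optimum with strictly smaller support and contradicting the minimality of $|S|$. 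Hence every pair in $S$ lies in some hyperedge of $\cH[S]$.

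Next I would bound $|S|$. If $|S| \ge t \ge 6$, then $\cH[S]$ is a covering $[3]$-graph on at least $6$ vertices, so Theorem~\ref{thm:3-cycle} supplies a Berge cycle of length $t$ in $\cH[S] \subseteq \cH$, contradicting the Berge-$C_t$-free hypothesis. Hence $|S| \le t-1$. Since $\bm{x}$ is supported on $S$ and $\cH[S]$ is a subhypergraph of $K^3_{|S|}$, monotonicity of the Lagrangian under adding edges gives
\[
\lambda(\cH) = P_{\cH[S]}(\bm{x}|_S) \le \lambda(\cH[S]) \le \lambda(K^3_{|S|}) \le \lambda(K^3_{t-1}),
\]
where the last inequality uses that $\lambda(K^3_r) = \binom{r}{3}/r^3 = \tfrac{1}{6}\bigl(1 - \tfrac{1}{r}\bigr)\bigl(1 - \tfrac{2}{r}\bigr)$ is increasing in $r$.

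There is no substantial obstacle: Theorem~\ref{thm:3-cycle} does the heavy lifting by forcing the minimum support to be small, while the minimum-support covering step and the monotonicity of $\lambda(K^3_r)$ are both routine.
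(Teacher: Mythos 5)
Your proof is correct and follows essentially the same route as the paper: take a minimum-support optimum, show the induced subhypergraph on the support is covering, invoke Theorem~\ref{thm:3-cycle} to force the support to have at most $t-1$ vertices, and compare with $\lambda(K^3_{t-1})$. The only cosmetic difference is that you re-derive the covering property of the minimal support (the paper cites this as Lemma~\ref{lem:Lagrangian-cover}, due to Frankl and R\"odl) and you appeal to monotonicity of $\lambda(K^3_r)$ in $r$ where the paper symmetrizes the weights; both are routine and your argument is complete.
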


\begin{corollary}\label{cor:3-path-Lagrangian}
Let $\cH$ be a $3$-uniform hypergraph on $n$ vertices without a Berge-$P_t$ where $n\geq t\geq 4$. Then 
$$\lambda(\cH) \leq\lambda(K^3_{t-1}) = \frac{1}{(t-1)^{3}} \binom{t-1}{3}.$$
\end{corollary}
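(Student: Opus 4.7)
The plan is to combine Theorem \ref{thm:3-path} with a standard minimum-support optimizer argument for the Lagrangian. Let $\bm{x}^* \in \mathbb{R}_{\geq 0}^n$ with $\norm{\bm{x}^*}_1 = 1$ satisfy $P_{\cH}(\bm{x}^*) = \lambda(\cH)$, chosen so that its support $S \subseteq V(\cH)$ has the smallest possible size among all optimizers, and set $s := |S|$.

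The first step is to establish a pair-covering property of $S$: every pair $\{u,v\} \subseteq S$ is contained in some hyperedge of $\cH$. Indeed, if no hyperedge of $\cH$ contains $\{u,v\}$, then fixing all other coordinates at their $\bm{x}^*$-values makes $P_{\cH}$ an affine function of $(x_u, x_v)$ on the segment $x_u + x_v = x_u^* + x_v^*$. Moving all of this mass to whichever of $u, v$ has the larger coefficient then either strictly increases $P_\cH$ (contradicting optimality of $\bm{x}^*$) or leaves it unchanged while strictly reducing the support (contradicting minimality of $|S|$).

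If $s \leq t-1$, we are done, because
\[
\lambda(\cH) = P_{\cH}(\bm{x}^*) = P_{\cH[S]}(\bm{x}^*|_S) \leq \lambda(\cH[S]) \leq \lambda(K^3_s) \leq \lambda(K^3_{t-1}),
\]
using that $\lambda(K^3_s) = \binom{s}{3}/s^3$ is increasing in $s$. So suppose $s \geq t \geq 4$. I would then construct an auxiliary $[3]$-uniform hypergraph $\cH^*$ on vertex set $S$ as follows: take every hyperedge of $\cH$ contained in $S$ as a $3$-edge of $\cH^*$, and for each pair $\{u,v\} \subseteq S$ not covered by any such $3$-edge, pick a witness hyperedge $h(\{u,v\}) \in \cH$ containing $\{u,v\}$ (which exists by the pair-covering property) and include $\{u,v\}$ as a $2$-edge of $\cH^*$. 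By construction the $2$-shadow of $\cH^*$ is complete, so $\cH^*$ is a covering $[3]$-graph on $s \geq 4$ vertices, and Theorem \ref{thm:3-path} produces a Hamiltonian Berge path $v_1 v_2 \cdots v_s$ in $\cH^*$ using distinct hyperedges $e_1, \ldots, e_{s-1}$.

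The final step is to lift this Berge path back to $\cH$: replace each $3$-edge $e_i$ by itself, and each $2$-edge $e_i$ by its witness $h(e_i) \in \cH$. Each lifted hyperedge contains the required consecutive pair, yielding a Berge $P_s$ in $\cH$; taking the first $t$ vertices and $t-1$ hyperedges then yields a Berge-$P_t$, contradicting the hypothesis. The main subtlety is ensuring that the lifted hyperedges are pairwise distinct in $\cH$: two lifted $3$-edges were already distinct in $\cH^*$; a $3$-edge and a lifted $2$-edge have intersections with $S$ of sizes $3$ and $2$ respectively; and two lifted $2$-edges $h(e_i), h(e_j)$ with $e_i \neq e_j$ satisfy $h(e_i) \cap S = e_i \neq e_j = h(e_j) \cap S$. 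I expect verifying this \emph{witness-injectivity} to be the only real obstacle in the argument.
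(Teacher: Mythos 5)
Your proof is correct and follows essentially the same route as the paper: a minimum-support optimizer has covering support (the paper cites this as the Frankl--R\"odl Lemma \ref{lem:Lagrangian-cover}), so Theorem \ref{thm:3-path} forces the support to have at most $t-1$ vertices, and the Lagrangian is then bounded by that of $K^3_{t-1}$. The only difference is cosmetic: since vertices outside $S$ carry zero weight, your smoothing argument in fact shows that every pair of $S$ lies in a hyperedge entirely contained in $S$ (i.e., the induced subhypergraph $\cH[S]$ is itself covering), which would let you apply Theorem \ref{thm:3-path} directly to $\cH[S]$ and skip the auxiliary trace hypergraph and the witness-injectivity check.
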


\section{Proof of Theorem \ref{thm:3-path} }

\begin{proof}[Proof of Theorem \ref{thm:3-path}]
Let $\cH = (V,E)$ be a covering $[3]$-uniform hypergraph on $n\geq 4$ vertices. Let $P = v_1 v_2 \ldots v_t$ be a maximum-length Berge path in $\cH$. If $t  = n$, we are done. Otherwise assume that $t < n$ and let $u$ be a vertex that is not a base vertex of $P$. Observe that by the maximality of $P$, we have $t\geq 3$. Call a hyperedge $h$ \textit{used} if $h$ is an edge in the Berge path $P$, otherwise call it \textit{free}. Since $\cH$ is covering, there exists a hyperedge $h_1$ containing $\{u,v_1\}$. The edge $h_1$ must be used in $P$ since otherwise we can extend $P$ by embedding $\{u, v_1\}$ in $h_1$. Since $\cH$ is $[3]$-uniform, the only way that $h_1$ can be used in $P$ is to embed $\{v_1, v_2\}$. Similarly, there exists a hyperedge $h_t$ that contains $\{u, v_t\}$ and is used to embed $\{v_{t-1},v_t\}$. Now consider a hyperedge $h'$ containing $\{v_1, v_t\}$. Note that $h'$ is free since both $\{v_1, v_2\}$ and $\{v_{t-1},v_t\}$ have already been embedded. Now consider the path 
$$P' = v_2 v_3 \cdots v_{t-1} v_t v_1 u$$
such that $\{v_t, v_1\}$ is embedded in $h'$, $\{v_1, u\}$ is embedded in $h_1$ and any other $2$-edge in $P'$ is embedded the same way as in $P$. Notice that $P'$ is a Berge hyperpath in $\cH$ that is longer than $P$. This gives us the contradiction. Hence $t = n$ and $P$ is a Hamiltonian Berge path in $\cH$.
\end{proof}

\section{Proof of Theorem \ref{thm:3-cycle}}

\begin{lemma}\label{lem:3-Hamiltonian-cycle}
Let $\cH = (V,E)$ be a covering $[3]$-graph on $n\geq 6$ vertices. Then $\cH$ contains a Hamiltonian Berge cycle.
\end{lemma}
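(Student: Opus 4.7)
The plan is to apply Theorem~\ref{thm:3-path} to obtain a Hamiltonian Berge path $P = v_1 v_2 \cdots v_n$ with embedding hyperedges $h_1, \dots, h_{n-1}$, and then close it into a cycle, possibly after rearranging which pair each $h_i$ embeds. Call a hyperedge \emph{free} if it does not appear in $\{h_1, \dots, h_{n-1}\}$. Since $\cH$ is covering, some hyperedge $h^\ast$ contains $\{v_1, v_n\}$; if $h^\ast$ is free, appending it to $P$ produces the desired cycle.

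Otherwise every hyperedge containing $\{v_1, v_n\}$ is used in $P$. If $h_i \supseteq \{v_1, v_n\}$ then $h_i$ also contains $\{v_i, v_{i+1}\}$, so $|\{v_1, v_n, v_i, v_{i+1}\}| \le |h_i| \le 3$, forcing $i \in \{1, n-1\}$ and hence $h_1 = \{v_1, v_2, v_n\}$ or $h_{n-1} = \{v_1, v_{n-1}, v_n\}$. After reversing $P$ if necessary, I assume $h_1 = \{v_1, v_2, v_n\}$. Because $h_1$ contains all three pairs $\{v_1, v_2\}$, $\{v_1, v_n\}$, $\{v_2, v_n\}$, reassigning which pair $h_1$ embeds yields alternative Hamiltonian Berge paths on the same hyperedge set, notably $P' = v_1 v_n v_{n-1} \cdots v_2$ and the cyclic shift $P'' = v_2 v_3 \cdots v_n v_1$, both having endpoints $\{v_1, v_2\}$; closing either requires a free hyperedge containing $\{v_1, v_2\}$.

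Iterating the size-$3$ analysis, if no free hyperedge contains $\{v_1, v_2\}$ then the only hyperedges containing $\{v_1, v_2\}$ lie in $\{h_1, h_2\}$, forcing $h_2 = \{v_1, v_2, v_3\}$. The symmetric argument at the other endpoint likewise forces $h_{n-1} = \{v_1, v_{n-1}, v_n\}$ and $h_{n-2} = \{v_{n-2}, v_{n-1}, v_n\}$ in the worst case. With this rigid local structure and the hypothesis $n \ge 6$, the interior pair $\{v_3, v_{n-2}\}$ consists of distinct vertices and is covered by some hyperedge; that hyperedge cannot equal any of the forced triples (each of which contains $\{v_1, v_2\}$ or $\{v_{n-1}, v_n\}$), so it must be free, and splicing it into an appropriately rearranged version of $P$ will yield a Hamiltonian Berge cycle.

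The main obstacle is the last splicing step: given the forced triples at both ends, one has to exhibit an explicit rearrangement using exactly $n$ distinct hyperedges (the free interior witness together with $h_1, h_2, h_{n-1}, h_{n-2}$ re-embedded among their contained pairs, plus the unchanged $h_3, \dots, h_{n-3}$) that realizes a Berge Hamiltonian cycle. This is especially delicate when $n = 6$, where the forced triples involve almost the entire vertex set and little room remains for rearrangement, so that boundary case will require an explicit check.
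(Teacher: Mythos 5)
There is a genuine gap, and it sits exactly where you flag it: the final splicing step does not work, and the forcing steps leading up to it are not valid. First, from ``no free hyperedge contains $\{v_1,v_2\}$'' you conclude that $h_2=\{v_1,v_2,v_3\}$; this does not follow, because the covering hypothesis is already satisfied by $h_1=\{v_1,v_2,v_n\}$, so nothing forces a second hyperedge on the pair $\{v_1,v_2\}$ (and symmetrically nothing forces $h_{n-2}=\{v_{n-2},v_{n-1},v_n\}$). Second, even granting the ``worst case'' rigid structure, a free hyperedge $g\supseteq\{v_3,v_{n-2}\}$ cannot in general be spliced in. Take $n=8$ with $h_1=\{v_1,v_2,v_8\}$, $h_2=\{v_1,v_2,v_3\}$, $h_7=\{v_1,v_7,v_8\}$, $h_6=\{v_6,v_7,v_8\}$: the only hyperedges containing the interior vertices $v_4,v_5$ may be $h_3,h_4,h_5$, so any Hamiltonian Berge cycle must traverse $v_3v_4v_5v_6$ consecutively, which makes the chord $\{v_3,v_6\}$ unusable (it would close a short cycle). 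One is then back to needing five distinct hyperedges to route $\cdots v_6\,?\,?\,?\,?\,v_3$ through $v_1,v_2,v_7,v_8$ with only $h_1,h_2,h_6,h_7$ available, and your argument supplies no further free hyperedge. For $n=6$ the witness pair $\{v_3,v_{n-2}\}=\{v_3,v_4\}$ degenerates to a path edge, as you note. The underlying problem is that you invoke the covering hypothesis only on four pairs near the endpoints, which is far too little information.

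The paper's proof is structured quite differently and shows what is actually needed. It first converts the closable-path obstruction into a Berge cycle $C$ of length $n-1$ with one excluded vertex $w$, then colors each $2$-edge of $C$ red or blue according to whether its embedding hyperedge contains $w$, and chooses $C$ extremal (maximizing red edges). Covering is then exploited on \emph{all} pairs $\{w,v_i\}$ and on the ``bridges'' between disjoint red pairs, yielding strong structural constraints (at most three red segments, bounded lengths, no three consecutive blue edges) that reduce the problem to five explicit configurations with $n\in\{6,7,8,10\}$, each dispatched by an explicit re-embedding. If you want to complete a proof, you will need some analogue of this global bookkeeping of how the leftover vertex attaches to the cycle; the local endpoint analysis alone cannot close the argument.
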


\begin{proof}[Proof of Lemma \ref{lem:3-Hamiltonian-cycle}]
Let $\cH = (V,E)$ be a covering $[3]$-graph on $n\geq 6$ vertices. Suppose for the sake of contradiction that $\cH$ does not contain a Hamiltonian Berge cycle.

We first claim that there then exists a Berge cycle of length $n-1$. By Theorem \ref{thm:3-path}, there is a Hamiltonian Berge path $P = u_1 u_2 \ldots u_{n}$ in $\cH$. Since $\cH$ is covering, if follows that there exists an edge $h \in E(\cH)$ such that $\{u_1, u_{n}\} \subseteq h$. If $h$ is not an edge in $P$, then we embed $u_1 u_{n}$ in $h$ and obtain a Hamiltonian Berge cycle. Otherwise, $h$ is used to embed either $u_1u_2$ or $u_{n-1}u_{n}$. WLOG, $h$ embeds $u_{n-1} u_{n}$. Then $h = \{u_1, u_{n-1}, u_{n}\}$. If we embed $u_1 u_{n-1}$ in $h$, we then obtain a Berge cycle $C = u_1 u_2 \ldots u_{n-1}$ of length $n-1$.

Let $C = v_1 v_2 \ldots v_{n-1}$ be a Berge cycle in $\cH$ of length $n-1$ and call the remaining vertex $w$. For ease of reference, consider $v_{n} \equiv v_1$ and $v_0 \equiv v_{n-1}$. For a $2$-edge $e = v_i v_{i+1}$ in $C$, we use $\phi(e)$ to denote the hyperedge in $C$ that embeds $e$. 
Consider a two-edge-coloring on $\{v_i v_{i+1}: i\in [n-1]\}$: color $v_iv_{i+1}$ red if the hyperedge that embeds $v_iv_{i+1}$ also contains $w$; otherwise color it blue. Assume that $C$ is picked among all Berge cycles of length $n-1$ such that $C$ has the most number of red edges (when viewed as a $2$-uniform cycle). 

Again, from now on, we call a hyperedge $h$ \textit{used} if $h$ is a hyperedge in $C$, otherwise call it \textit{free}. Moreover, when we say 2-edges of $C$, we mean the 2-uniform edges of $C$ when $C = v_1 v_2 \ldots v_{n-1}$ is viewed as a 2-uniform cycle. Otherwise, $C$ is considered a $[3]$-graph.

\begin{figure}[htb]
\begin{center}
    \begin{minipage}{0.3\textwidth}
        \resizebox{4cm}{!}{ \begin{tikzpicture}
        \tikzstyle{vertex}=[circle,fill=black,inner sep=1pt]
        
        \pgfmathsetmacro\unitangle{360/9}
        \pgfmathsetmacro\offset{1}

        \node[style=vertex,label=right:{$v_1$}] (v1) at (9 * \unitangle: \offset) {};
        \node[style=vertex,label=above:{$v_2$}] (v2) at (1 * \unitangle: \offset) {};
        \node[style=vertex,label=above:{$v_i$}] (v3) at (2 * \unitangle: \offset) {};
        \node[style=vertex,label=above:{$v_{i+1}$}] (v4) at (3 * \unitangle: \offset) {};
        \node[style=vertex,label=left:{$v_{i+2}$}] (v5) at (4 * \unitangle: \offset) {};
        \node[style=vertex] (v6) at (5 * \unitangle: \offset) {};
        \node[style=vertex,label=below:{$v_{j}$}] (v7) at (6 * \unitangle: \offset) {};
        \node[style=vertex,label=below:{$v_{j+1}$}] (v8) at (7 * \unitangle: \offset) {};
        \node[style=vertex,label=right:{$v_{n-1}$}] (v9) at (8 * \unitangle: \offset) {};

        \draw[blue] (v1) -- (v2);
        \draw[blue, dashed] (v2) -- (v3);
        \draw[red] (v3) -- (v4);
        \draw[blue] (v4) -- (v5);
        \draw[blue, dashed] (v5) -- (v6);
        \draw[blue] (v6) -- (v7);
        \draw[red] (v7) -- (v8);
        \draw[blue, dashed] (v8) -- (v9);
        \draw[blue] (v9) -- (v1);

        \draw (v4) -- (v8);
\end{tikzpicture}}
    \end{minipage} \hspace{0.5cm}
    \begin{minipage}{0.3\textwidth}
        \resizebox{4cm}{!}{ \begin{tikzpicture}
        \tikzstyle{vertex}=[circle,fill=black,inner sep=1pt]
        
        \pgfmathsetmacro\unitangle{360/9}
        \pgfmathsetmacro\offset{1}

        \node[style=vertex,label=right:{$v_1$}] (v1) at (9 * \unitangle: \offset) {};
        \node[style=vertex,label=above:{$v_2$}] (v2) at (1 * \unitangle: \offset) {};
        \node[style=vertex,label=above:{$v_i$}] (v3) at (2 * \unitangle: \offset) {};
        \node[style=vertex,label=above:{$v_{i+1}$}] (v4) at (3 * \unitangle: \offset) {};
        \node[style=vertex,label=left:{$v_{i+2}$}] (v5) at (4 * \unitangle: \offset) {};
        \node[style=vertex] (v6) at (5 * \unitangle: \offset) {};
        \node[style=vertex,label=below:{$v_{j}$}] (v7) at (6 * \unitangle: \offset) {};
        \node[style=vertex,label=below:{$v_{j+1}$}] (v8) at (7 * \unitangle: \offset) {};
        \node[style=vertex,label=right:{$v_{n-1}$}] (v9) at (8 * \unitangle: \offset) {};
        \node[style=vertex,label=right:{$w$}] (w) at (0.2,0) {};

        \draw[blue] (v1) -- (v2);
        \draw[blue, dashed] (v2) -- (v3);
        \draw[blue] (v4) -- (v5);
        \draw[blue, dashed] (v5) -- (v6);
        \draw[blue] (v6) -- (v7);
        \draw[blue, dashed] (v8) -- (v9);
        \draw[blue] (v9) -- (v1);

        \draw (w) -- (v3);
        \draw (w) -- (v7);
        \draw (v4) -- (v8);

\end{tikzpicture}}
    \end{minipage} 
    \caption{Using a bridge to extend the cycle.}
    \label{fig:bridge}

\end{center}
\end{figure}

\begin{claim}\label{cl:two-red-one-free}
If there exist two disjoint red pairs $v_iv_{i+1}, v_jv_{j+1}$ such that there is a free edge $h$ containing either $v_i v_j$ or $v_{i+1} v_{j+1}$, then we have a Hamiltonian Berge cycle. 
\end{claim}
\begin{proof}
Recall that $\phi(v_k v_{k+1})$ denotes the hyperedge in $C$ that embeds $v_k v_{k+1}$. Suppose there is a free edge $h$ containing $v_{i+1}v_{j+1}$ (as shown in Figure \ref{fig:bridge}).
Consider the cycle $$C' = v_i w v_j v_{j-1} \ldots v_{i+1} v_{j+1} v_{j+2} \ldots v_i.$$ Embed $v_i w$ in $\phi(v_i v_{i+1})$; embed $w v_j$ in $\phi(v_j v_{j+1})$; embed $v_{i+1}v_{j+1}$ in $h$. For any other edge $e$ of $C'$, embed $e$ in $\phi(e)$. It's easy to see that $C'$ is a Hamiltonian Berge cycle.
\end{proof}

Observe that given two disjoint red pairs $v_iv_{i+1}$, $v_j v_{j+1}$, if the hyperedge $h$ containing $v_iv_j$ is not free, then it must be used to embed either $v_{i-1}v_i$ or $v_{j}v_{j-1}$. Similarly, if the hyperedge containing $v_{i+1}v_{j+1}$ is not free, then it must be used to embed either $v_{i+1}v_{i+2}$ or $v_{j+1}v_{j+2}$. For convenience, given vertex-disjoint red pairs $v_iv_{i+1}$, $v_j v_{j+1}$, we call the vertex pairs $v_{i}v_{j}$ and $v_{i+1}v_{j+1}$ \textit{bridges}. By Claim \ref{cl:two-red-one-free}, if a bridge is free, then we are done. Otherwise by the above observation, a bridge must be used to embed a blue $2$-edge in $C$ that intersects the bridge. 
Call a sequence of vertices a \textit{segment} if they are consecutive in $C$. A segment is red (or blue) if the $2$-edges in $C$ (viewed as a $2$-uniform cycle) induced by the vertices in the segment are all red (or blue).
By Claim \ref{cl:two-red-one-free}, it is easy to derive the following consequences:
\begin{enumerate}[(C1)]
    \item There are no four pairwise disjoint red segments. This is because, for any four pairwise disjoint red segments, there are at least $2 \binom{4}{2} = 12$ bridges but only at most $8$ blue edges that intersects the four red segments. Hence one of the bridges must be free. Then we are done by Claim \ref{cl:two-red-one-free}.
    
    \item If there are three pairwise disjoint red segments,  there must be at least two blue edges (in both directions) between every two red segments. Moreover, each of the red segments has length $1$. This is because, three pairwise disjoint red segments have at least six bridges. If there is only one blue edge between two of the red segments, then there are at most five blue edges intersecting the red segments. Hence one of the bridges must be free and we are done by Claim \ref{cl:two-red-one-free}. Similarly, if there exists one red segment that has length at least $2$, then there will be at least $10$ bridges. Since there are at most $6$ blue edges intersecting these three red segments, it follows that at least one of the bridges is free and we are done. 
    
    \item There can be only one red segment of length at least $2$. Moreover, if there is any other red segment, then there must be at least two blue edges (in both directions) between the two red segments. The logic is similar: if there are two red segments of length at least $2$, then there are at least $7$ bridges but only at most $4$ blue edges intersecting these two red segments. Hence we have a free bridge and we are done. Now suppose there is one red segment of length at least $2$ and one red segment of length $1$, then there are at least $4$ bridges. Hence there must be at least two blue edges (in both directions) between the two red segments.

    \item If there is a red segment of length $3$, there is no other red segment. Otherwise, there are at least $6$ bridges but at most $4$ blue edges intersecting these two red segments. Hence we have a free bridge and we are done.
    
    \item There is no red segment of length at least $4$. Otherwise there are at least $5$ bridges but at most $2$ blue edges intersecting this red segment. Hence we have a free bridge and we are done.
\end{enumerate}

\begin{claim}\label{cl:3-white}
If there exist three consecutive blue edges in $C$, i.e., $v_i, v_{i+1}, v_{i+2}, v_{i+3}$ such that $v_{k}v_{k+1}$ is blue for $k \in \{i,i+1,i+2\}$, then we have a Hamiltonian Berge cycle.
\end{claim}
\begin{proof}
Since $\cH$ is covering, it follows that there exist free edges $h_1, h_2$ such that $h_1$ contains $w v_{i+1}$ and $h_2$ contains $w v_{i+2}$. Note that $h_1 \neq h_2$ otherwise we have a free $h = \{w, v_{i+1}, v_{i+2}\}$, which contradicts our assumption that $C$ is picked such that it has the maximum number of red edges. Now consider the cycle 
$$C' = v_1 \ldots v_{i+1} w v_{i+2} \ldots v_{n-1}.$$
Embed $v_{i+1}w$ in $h_1$; embed $wv_{i+2}$ in $h_2$; embed any other edge $e$ the same way it is embedded in $C$. We then obtain a Hamiltonian Berge cycle.
\end{proof}

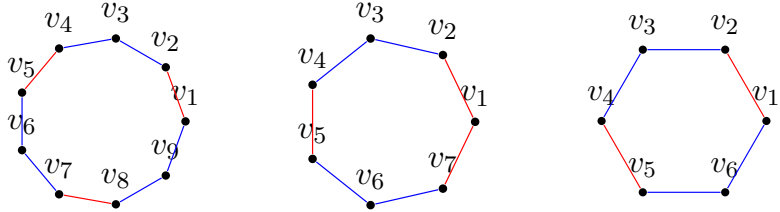
\begin{figure}[htb]
\begin{center}
    \begin{minipage}{.2\textwidth}
        \resizebox{3cm}{!}{ \begin{tikzpicture}
        \tikzstyle{vertex}=[circle,fill=black,inner sep=1pt]
        \pgfmathsetmacro\unitangle{360/9}
        \pgfmathsetmacro\offset{1}
        \foreach \j in {0,...,8}{
            \pgfmathsetmacro\index{int(\j+1)}
        	\node[style=vertex,label=above:{$v_{\index}$}] (y{\j,0}) at (\j * \unitangle: \offset) {};
        }
        \foreach \j in {0,...,8}{
        	\pgfmathsetmacro\jump{int(Mod(int(\j+1),9))};
        	\ifthenelse{\j = 0 \OR \j=3 \OR \j=6}
        	{\draw[red] (y{\j,0}) -- (y{\jump,0});}
        	{\draw[blue] (y{\j,0}) -- (y{\jump,0});}
        }
\end{tikzpicture}}
    \end{minipage} \hspace{0.5cm}
    \begin{minipage}{.2\textwidth}
        \resizebox{3cm}{!}{\begin{tikzpicture}
        \tikzstyle{vertex}=[circle,fill=black,inner sep=1pt]
        \pgfmathsetmacro\unitangle{360/7}
        \pgfmathsetmacro\offset{1}
        
        \foreach \j in {0,...,6}{
            \pgfmathsetmacro\index{int(\j+1)}
        	\node[style=vertex,label=above:{$v_{\index}$}] (y{\j,0}) at (\j * \unitangle: \offset) {};
        }
        \foreach \j in {0,...,6}{
        	\pgfmathsetmacro\jump{int(Mod(int(\j+1),7))};
        	\ifthenelse{\j = 6 \OR \j=0 \OR \j=3}
        	{\draw[red] (y{\j,0}) -- (y{\jump,0});}
        	{\draw[blue] (y{\j,0}) -- (y{\jump,0});}
        }
        \end{tikzpicture}}
    \end{minipage} \hspace{0.5cm}
    \begin{minipage}{.2\textwidth}
        \resizebox{3cm}{!}{        
\begin{tikzpicture}
        \tikzstyle{vertex}=[circle,fill=black,inner sep=1pt]
        \pgfmathsetmacro\unitangle{360/6}
        \pgfmathsetmacro\offset{1}
        \foreach \j in {0,...,5}{
            \pgfmathsetmacro\index{int(\j+1)}
        	\node[style=vertex,label=above:{$v_{\index}$}] (y{\j,0}) at (\j * \unitangle: \offset) {};
        }
        \foreach \j in {0,...,5}{
        	\pgfmathsetmacro\jump{int(Mod(int(\j+1),6))};
        	\ifthenelse{\j = 0 \OR \j=3}
        	{\draw[red] (y{\j,0}) -- (y{\jump,0});}
        	{\draw[blue] (y{\j,0}) -- (y{\jump,0});}
        }
\end{tikzpicture}}
    \end{minipage}
    
    \caption{Remaining cases: for $n\geq 7$. (a): $n=10$; (b): $n=8$; (c): $n=7$.}
    \label{fig:remaining}

\end{center}
\end{figure}

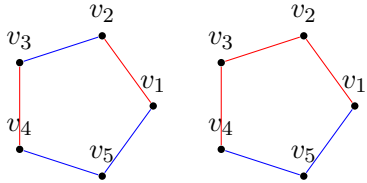
\begin{figure}[htb]
\begin{center}
    \begin{minipage}{.15\textwidth}
        \resizebox{2.5cm}{!}{\begin{tikzpicture}
        \tikzstyle{vertex}=[circle,fill=black,inner sep=1pt]
        \pgfmathsetmacro\unitangle{360/5}
        \pgfmathsetmacro\offset{1}
        
        \foreach \j in {0,...,4}{
            \pgfmathsetmacro\index{int(\j+1)}
        	\node[style=vertex,label=above:{$v_{\index}$}] (y{\j,0}) at (\j * \unitangle: \offset) {};
        }
        \foreach \j in {0,...,4}{
        	\pgfmathsetmacro\jump{int(Mod(int(\j+1),5))};
        	\ifthenelse{\j=0 \OR \j=2}
        	{\draw[red] (y{\j,0}) -- (y{\jump,0});}
        	{\draw[blue] (y{\j,0}) -- (y{\jump,0});}
        }
        \end{tikzpicture}}
    \end{minipage} \hspace{0.1cm}
    \begin{minipage}{.15\textwidth}
        \resizebox{2.5cm}{!}{\begin{tikzpicture}
        \tikzstyle{vertex}=[circle,fill=black,inner sep=1pt]
        \pgfmathsetmacro\unitangle{360/5}
        \pgfmathsetmacro\offset{1}
        
        \foreach \j in {0,...,4}{
            \pgfmathsetmacro\index{int(\j+1)}
        	\node[style=vertex,label=above:{$v_{\index}$}] (y{\j,0}) at (\j * \unitangle: \offset) {};
        }
        \foreach \j in {0,...,4}{
        	\pgfmathsetmacro\jump{int(Mod(int(\j+1),5))};
        	\ifthenelse{\j = 1 \OR \j=0 \OR \j=2}
        	{\draw[red] (y{\j,0}) -- (y{\jump,0});}
        	{\draw[blue] (y{\j,0}) -- (y{\jump,0});}
        }
        \end{tikzpicture}}
    \end{minipage} \hspace{0.1cm}
    \caption{Remaining cases for $n = 6$. (a): $2$ red segments; (b): $1$ red segment.}
    \label{fig:remaining2}
\end{center}
\end{figure}

\begin{claim}\label{cl:finite}
If $\cH$ contains no Hamiltonian cycle, then there are only $5$ possible red-blue colorings of $C$, all listed in Figure \ref{fig:remaining} and \ref{fig:remaining2}.
\end{claim}
\begin{proof}

Suppose there are $s$ red segments in $C$ of decreasing length $\vec{r} = \{r_1, r_2, \cdots, r_s\}$. By consequence (C1) above, $s\leq 3$. By consequence (C5), $r_1 \leq 3$. 

\begin{description}
    \item Case (A): $r_1 = 3$. In this case, by consequence (C4), all other edges of $C$ are blue. Now by Claim \ref{cl:3-white}, there can be at most $2$ other blue edges. Consequence (C4) again implies there has to be at least two blue edges. Hence the only possible such cycle is the one in Figure  \ref{fig:remaining2}(b).
    
    \item Case (B): $r_2 = 2$. In this case, by consequence (C1)-(C3), $\vec{r}$ can either be $\{2,1\}$ or $\{2\}$. If $\vec{r} = \{2\}$, we have a blue segment of length $4$ since $n\geq 6$. By Claim \ref{cl:3-white}, $\cH$ contains a Hamiltonian Berge cycle, which is a contradiction. Hence $\vec{r} = \{2,1\}$. Now (C3) implies there are at least two blue edges between the two red segments in each direction. Since there is no blue segment of length $3$, it follows that Figure \ref{fig:remaining}(b) is the only possible option.

    \item Case (C): $r_1 = 1$. Then $\vec{r}$ can be $\{1,1,1\}, \{1,1\}$, or $\{1\}$. If $\vec{r} = \{1,1,1\}$, then (C2) and Claim \ref{cl:3-white} implies that Figure \ref{fig:remaining}(a) is the only option; if $\vec{r} = \{1,1\}$, it easily follows from Claim \ref{cl:3-white} and $n\geq 6$ that Figure \ref{fig:remaining}(c), Figure \ref{fig:remaining2}(a) are the only options; the last case is not possible since we assume $n\geq 6$.
    
\end{description}
    This completes the proof of the claim.
\end{proof}

Now we show that in each of the cases above, we can extend $C$ into a Hamiltonian Berge cycle, which gives us a contradiction.

\begin{description}
    \item Case 1: $n= 10$. By Claim \ref{cl:finite}, it suffices to consider the cycle in Figure \ref{fig:remaining}(a). In this case, observe there must be a free hyperedge containing each of $wv_3$, $w v_6$ and $w v_9$. Moreover, the free hyperedges containing $wv_3$, $w v_6$ and $w v_9$ cannot be the same hyperedge. Hence, WLOG, let $h_1$ be the free edge containing $w v_3$ and $h_2$ be the free hyperedge containing $w v_9$. Now observe that $v_2 v_8$ is bridge. Let $h$ be an hyperedge containing $v_2 v_8$. If $h$ is free, we are done by Claim \ref{cl:two-red-one-free}. Otherwise, WLOG, $h$ is used to embed $v_2 v_3$ or $v_8 v_9$. In either case, consider the cycle
    $$C' = v_2 v_8 v_7 \ldots v_3 w v_9 v_1v_2$$
    where $v_2 v_8$ is embedded in $h$; $v_3w$ is embedded in $h_1$; $w v_9$ is embedded in $h_2$; and any other $2$-edge of $C'$ is embedded the same way as in $C$.

    \item Case 2:  $n= 8$. By Claim \ref{cl:finite}, it suffices to consider the cycle in Figure \ref{fig:remaining}(b). Note that $v_4v_1$ is a bridge. Hence if the edge $h$ containing $v_4 v_1$ is free, then we are done by Claim \ref{cl:two-red-one-free}. Otherwise, $h$ must be used to embed $v_3 v_4$, i.e. $h = \{v_1,v_3, v_4\}$. Let $h'$ be a hyperedge containing $wv_3$. Note that $h'$ must be free, otherwise it would either embeds $v_2 v_3$ or $v_3 v_4$, giving us a cycle with larger number of red $2$-edges. Now consider the cycle $$C' = v_1 v_4 v_5 v_6 v_7 w v_3 v_2 v_1$$ such that $v_1 v_4$ is embedded in $h$,  $v_7 w$ is embedded in $\phi(v_7 v_1)$,  $wv_3$ is embedded in $h'$, and every other $2$-edge of $C'$ is embedded the same way as in $C$.
    
    \item Case 3: $n=7$. By Claim \ref{cl:finite}, it suffices to consider the cycle in Figure \ref{fig:remaining}(c). Note that $v_4v_1$ is a bridge. Hence if the edge $h$ containing $v_4 v_1$ is free, then we are done by Claim \ref{cl:two-red-one-free}. Otherwise, WLOG, suppose $h$ is used to embed $v_3 v_4$, i.e., $h = \{v_1,v_3, v_4\}$. Moreover there are free edges $h_1$, $h_2$ (may be the same) such that $\{w,v_3\} \subseteq h_1$ and $\{w,v_6\} \subseteq h_2$. If $h_1 \neq h_2$, then consider the cycle
     \[v_1 v_4 v_5 v_6 w v_3 v_2 v_1\]
    such that $v_1 v_4$ is embedded in $h$, $v_6 w$ is embedded in $h_2$, $wv_3$ is embedded in $h_1$ and all other edges are embedded the same way as in $C$. We then obtain a Hamiltonian Berge cycle. On the other hand, suppose $h_1 = h_2$, then it follows that $h' = \{v_3, v_6,w\}$ is a free edge. Now consider the cycle 
    \[v_1 v_2 v_3 v_6 v_5 v_4 v_1\]
    such as $v_3v_6$ is embedded in $h'$, $v_4 v_1$ is embedded in $h$ and all other edges are embedded in the same way as before. Observe that this cycle, using the same coloring scheme as before, has three red edges, which contradicts our assumption that the cycle in Figure \ref{fig:remaining} has maximal number of red edges.

    \item Case 4: $n =6$. There are two possible coloring for $n = 6$ (see Figure \ref{fig:remaining2}$(a)(b)$). Let us first look at the case (Figure \ref{fig:remaining2}$(a)$) when there are two disjoint red segments of length $1$. Let $h_0$ be a hyperedge containing $w v_5$. Observe $h_0$ must be free, since otherwise it must be embedding $v_1 v_5$ or $v_4 v_5$, which contradicts that $v_1 v_5$ and $v_4 v_5$ are blue. Let $h_1, h_2$ be two hyperedges containing $v_1 v_3$ and $v_2 v_4$ respectively. 
    Note since $v_1 v_3$ and $v_2 v_4$ are bridges, if either of $h_1, h_2$ is free, then we are done by Claim \ref{cl:two-red-one-free}. Otherwise, there are two subcases:
        \begin{description}
            \item Case 4(a): $h_1 = \{v_1, v_3, v_5\}$ and $h_2 = \{v_2, v_4, v_5\}$. Let $h_3$ be a hyperedge containing $v_1 v_4$. It follows that $h_3$ must be free. Now consider the cycle
            $$w v_5 v_1 v_4 v_3 v_2 w$$
            such that $w v_5$ is embedded in $h_0$, $v_1 v_4$ is embedded in $h_3$, $wv_2$ is embedded in $\phi(v_1 v_2)$, and any other $2$-edge embedded the same way as in $C$. This is a Hamiltonian Berge cycle, which gives us a contradiction.
            
            \item Case 4(b): WLOG, $h_1 =\{v_1, v_2, v_3\}$ and $h_2 = \{v_2, v_4, v_5\}$. Then consider the cycle  
                $$w v_5 v_1 v_3 v_4 v_2 w$$
            such that $w v_5$ is embedded in $h_0$, $v_1 v_3$ is embedded in $h_1$, $v_4 v_2$ is embedded in $h_2$, $wv_2$ is embedded in $\phi(v_1 v_2)$, and any other $2$-edge embedded the same way as in $C$.
        \end{description}
    In both cases, we obtain a Hamiltonian Berge cycle. Hence we are done with the case in Figure \ref{fig:remaining}(a). The case in Figure \ref{fig:remaining}(b) is the same as Case 4(a).

\end{description}
\end{proof}

Given an $[k]$-graph $\cH=(V,E)$ and a subset $S\subseteq V$, the {\em trace} of $\cH$ on $S$ is defined to be the $[k]$-graph $\cH_S=(S, E')$ with the vertex set $S$ and the edge set $E':=\{F\cap S\colon F\in E(\cH)\}$. 
Traces of hypergraphs are very useful in extremal problems involving (non-uniform) hypergraphs. For some examples of results on trace functions and applications, see \cite{Sauer72, Shelah72, Vapnik-Chervonenkis71, Johnston-Lu14}.
Regarding the trace of covering hypergraphs, the following observations can be easily verified by definition.
\begin{proposition}\label{prop:cover}
Let $\cH$ be a $[k]$-graph and $S\subseteq V(\cH)$ be any subset of vertices. Then the following statements hold:
\begin{enumerate}
    \item If $\cH$ is covering, so is $\cH_S$.
    \item For each Berge-cycle (or Berge-path) in $\cH_S$, there is a Berge-cycle (or Berge-path) in $\cH$ with the same sequence of base vertices and thus with the same length.
\end{enumerate}
\end{proposition}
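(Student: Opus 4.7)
The plan is to verify both statements directly from the definition of trace; neither requires any of the structural machinery developed earlier in the paper.

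For part (1), fix any pair $u,v \in S$. Since $\cH$ is covering, there exists $h \in E(\cH)$ with $\{u,v\}\subseteq h$. By definition of the trace, $h\cap S \in E(\cH_S)$, and since $u,v\in S$ we still have $\{u,v\}\subseteq h\cap S$. Hence every pair in $S$ is contained in an edge of $\cH_S$, so $\cH_S$ is covering.

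For part (2), I use a simple lifting argument. Suppose $v_1,\ldots,v_t$ together with distinct hyperedges $f_1,\ldots,f_t \in E(\cH_S)$ form a Berge cycle in $\cH_S$ (the Berge path case is identical). By definition of the trace, for each $i$ there exists some $F_i \in E(\cH)$ with $F_i\cap S = f_i$; fix one such preimage for each $i$. Then $\{v_i,v_{i+1}\}\subseteq f_i\subseteq F_i$, so the sequence $v_1,\ldots,v_t$ together with the hyperedges $F_1,\ldots,F_t$ satisfies the containment condition for a Berge cycle in $\cH$, with exactly the same base vertex sequence and thus the same length.

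The only delicate point, and the only place anything could fail, is that a Berge cycle requires the hyperedges $F_1,\ldots,F_t$ to be pairwise distinct in $\cH$. This is automatic from the choice of preimages: if $F_i=F_j$ then $f_i = F_i\cap S = F_j\cap S = f_j$, contradicting the distinctness of the $f_i$ in $\cH_S$. So no genuine obstacle arises, and the proposition follows.
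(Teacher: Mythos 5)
Your proof is correct and is exactly the direct verification from the definition of the trace that the paper has in mind (the paper offers no written proof, stating only that the claims ``can be easily verified by definition''). You also correctly isolate and resolve the one genuinely delicate point, namely that distinctness of the trace edges $f_i$ forces distinctness of the chosen preimages $F_i$, so nothing is missing.
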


\begin{lemma}\label{lem:small3}
Let $\cH = (V,E)$ be a covering $[3]$-graph on $n\geq 4$ vertices. Then $\cH$ contains a Berge-triangle.
\end{lemma}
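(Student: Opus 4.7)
The plan is to reduce the problem to the case $n=4$ using the trace construction, then handle that base case directly. Given any 4-element subset $S\subseteq V(\cH)$, Proposition \ref{prop:cover}(1) says that the trace $\cH_S$ is a covering $[3]$-graph on $4$ vertices, and Proposition \ref{prop:cover}(2) says that any Berge cycle found in $\cH_S$ lifts to a Berge cycle in $\cH$ on the same base vertices. So it suffices to establish the lemma when $n=4$.

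For the base case, let $V=\{a,b,c,d\}$ and suppose for contradiction that $\cH$ contains no Berge triangle. The key structural observation is that in a $[3]$-graph, any hyperedge that contains two distinct pairs from a triple $T=\{x,y,z\}$ must already contain all three of $x,y,z$ and therefore coincide with $T$ as a hyperedge. Fix the triple $T=\{a,b,c\}$, and first suppose $T\notin E(\cH)$. Then every hyperedge covering one of the three pairs $\{a,b\},\{a,c\},\{b,c\}$ covers exactly one of them, so the three hyperedges guaranteed by the covering hypothesis (one for each of these pairs) are pairwise distinct and together form a Berge triangle on $T$—contradiction. Hence every 3-subset of $V$ must be a hyperedge of $\cH$; in particular $\{a,b,d\},\{a,c,d\},\{b,c,d\}\in E(\cH)$. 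But these three distinct hyperedges embed the pairs $\{a,b\},\{a,c\},\{b,c\}$ respectively, yielding a Berge triangle on $\{a,b,c\}$, again a contradiction.

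The only subtle step is the structural observation used in the $n=4$ case: a hyperedge of size at most $3$ covering two of the pairs $\{a,b\},\{a,c\},\{b,c\}$ must in fact be the full triple $\{a,b,c\}$. Once this is noted, both halves of the $n=4$ argument are mechanical, and the trace reduction is routine given Proposition \ref{prop:cover}. I do not expect any real obstacle beyond keeping the case analysis clean.
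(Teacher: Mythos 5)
Your proof is correct, and it takes a genuinely different route from the paper's. The paper argues directly on $\cH$: the all-$2$-edges case is immediate, and otherwise it fixes a $3$-edge $h=\{v_1,v_2,v_3\}$, picks $v_4\notin h$, takes a hyperedge $h'\supseteq\{v_1,v_4\}$ which must miss one of $v_2,v_3$ (say $v_2$), then a hyperedge $h''\supseteq\{v_2,v_4\}$, and observes that $h,h',h''$ are pairwise distinct and embed the Berge triangle $v_1v_2v_4$. You instead reduce to $n=4$ via the trace and Proposition \ref{prop:cover}, then argue by contradiction that every triple of the $4$-set would have to be a hyperedge, which itself yields a triangle. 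Both arguments are elementary and of comparable length. The paper's version is a direct construction that needs no auxiliary machinery (note that Proposition \ref{prop:cover} is stated only after this lemma in the paper, though that ordering is cosmetic, and the trace can in principle produce edges of size $\le 1$, a harmless but slightly awkward point). Your version isolates the clean combinatorial core — a hyperedge of size at most $3$ covering two distinct pairs of a triple must equal that triple — which is essentially the same observation the paper uses implicitly when it notes that $h'$ contains at most one of $v_2,v_3$. Your case analysis is airtight: when $\{a,b,c\}\notin E$ the three covering hyperedges each cover exactly one of the three pairs and are therefore distinct, and in the complementary case the three triples through $d$ are distinct hyperedges covering the three pairs of $\{a,b,c\}$.
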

\begin{proof}
If all hyperedges have size $2$, then clearly we can find a Berge-triangle. Otherwise, take an arbitrary hyperedge $h = \{v_1, v_2, v_3\}$. Let $v_4$ be an arbitrary vertex not in $h$. Since $\cH$ is covering, there exists a hyperedge $h'$ containing $v_1 v_4$. Note that $h'$ contains at most one vertex from $\{v_2, v_3\}$. WLOG, $v_2 \notin h'$. Then there exists another hyperedge $h''$ containing $v_2 v_4$. It follows that we have a Berge-triangle $v_1 v_2 v_4$ such that $v_1 v_2$, $v_2 v_4$ and $v_4 v_1$ are embedded in $h, h'$ and $h''$ respectively.
\end{proof}

\begin{proof}[Proof of Theorem \ref{thm:3-cycle}]
Let $\cH$ be a covering $[3]$-graph on $n\geq 6$ vertices. We want to show that $\cH$ contains all Berge cycles of length $3\leq s\leq n$. 
Observe that given any $S\subseteq V(\cH)$ with $|S|\geq 6$, Lemma \ref{lem:3-Hamiltonian-cycle} implies that $\cH_S$ contains a Hamiltonian Berge cycle, which by Proposition \ref{prop:cover}, can be lifted to a Berge cycle of length $|S|$ in $\cH$. Hence $\cH$ contains Berge cycles of length $6 \leq s \leq n$.

\begin{claim}\label{cl:Berge-C5}
$\cH$ contains a Berge cycle of length $5$.
\end{claim}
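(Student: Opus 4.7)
The plan is to extract a Berge $C_5$ from a Hamiltonian Berge $C_6$ in $\cH$. Since $n\geq 6$, Lemma \ref{lem:3-Hamiltonian-cycle} applied to any six-vertex trace together with Proposition \ref{prop:cover} yields a Berge $C_6$ in $\cH$; write it as $C=v_1 v_2\cdots v_6$ with embedding hyperedges $h_1,\ldots,h_6$ (indices mod $6$, with $h_i$ embedding $v_i v_{i+1}$). I aim to shorten $C$ to a $C_5$ by ``bypassing'' one of its vertices.

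For each $i\in\{1,\ldots,6\}$, the natural attempt is the consecutive $C_5$ on $V(C)\setminus\{v_i\}$ in the inherited cyclic order. It reuses the four hyperedges $\{h_j:j\neq i-1,i\}$ for the preserved consecutive pairs and requires a fifth hyperedge embedding the new bridge $v_{i-1}v_{i+1}$. Since $\cH$ is covering, some hyperedge contains $v_{i-1}v_{i+1}$, and the bypass succeeds whenever such a hyperedge lies outside $\{h_j:j\neq i-1,i\}$. By $|h_j|\leq 3$ together with $\{v_j,v_{j+1}\}\subseteq h_j$, the only members of that set which can contain both $v_{i-1}$ and $v_{i+1}$ are $h_{i+1}=\{v_{i-1},v_{i+1},v_{i+2}\}$ and $h_{i-2}=\{v_{i-2},v_{i-1},v_{i+1}\}$. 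If any of the six bypasses succeeds, we are done.

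Otherwise all six bypasses fail, and every $h_j$ is forced to be either $\{v_{j-2},v_j,v_{j+1}\}$ (``type $A$'') or $\{v_j,v_{j+1},v_{j+3}\}$ (``type $B$''). The resulting 2-SAT-style system decouples into three independent $A/B$ choices (corresponding to three disjoint $4$-cycles in an auxiliary covering bipartite graph), giving exactly $8$ rigid configurations for $(h_1,\ldots,h_6)$. In each rigid configuration, the non-consecutive cyclic order $v_1 v_2 v_5 v_3 v_4 v_1$ admits a valid Berge $C_5$: the sides $v_1 v_2$ and $v_3 v_4$ have uniquely forced embeddings $h_1$ and $h_3$, the short diagonal $v_5 v_3$ has uniquely forced embedding $h_5$ or $h_2$ (depending on the type of the $(h_2,h_5)$-pair), and the two long-diagonal pairs $v_2 v_5$, $v_4 v_1$ are embedded either by some $h_j$ left unused, or, if no such $h_j$ exists, by an external hyperedge of $\cH$. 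Such an external hyperedge exists by covering and must be distinct from all $h_1,\ldots,h_6$ since by hypothesis none of them covers that particular long-diagonal pair; moreover the rigidity of the $h_j$ forces this external hyperedge to avoid every short diagonal, which is compatible with containing only a long-diagonal pair (possibly as a size-$2$ hyperedge).

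The main obstacle is the rigid case: one must verify in each of the $8$ configurations that the construction $v_1 v_2 v_5 v_3 v_4 v_1$ yields five distinct hyperedges embedding its five pairs. However, the strong restrictions on the forms of $h_1,\ldots,h_6$ leave only a handful of candidate hyperedges for each pair, so each of the eight checks reduces to a direct, routine inspection.
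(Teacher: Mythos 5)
Your proposal is correct, and its first half coincides with the paper's: both delete one vertex $v_i$ of a Berge $C_6$ and try to re-close with a hyperedge on the short diagonal $v_{i-1}v_{i+1}$, observing that this fails only if every hyperedge containing that pair is one of the two used hyperedges $\{v_{i-1},v_{i+1},v_{i+2}\}$ or $\{v_{i-2},v_{i-1},v_{i+1}\}$. Where you genuinely diverge is the rigid case, and your organization is the better one. The paper splits according to whether two ``adjacent'' forced hyperedges coexist (its Case 1, resolved by a local re-routing such as $v_1v_3v_2v_5v_6$) or the forced choices alternate (its Case 2); your constraint system --- for every $j$, ($h_j$ is type $A$) or ($h_{j+3}$ is type $B$) --- shows that each $h_j$ is typed and that $h_j,h_{j+3}$ receive equal types, which forces two consecutive hyperedges of equal type and thereby reveals the paper's Case 2 to be vacuous (indeed the six hyperedges listed there do not form a consistent Berge $C_6$: $\{v_1,v_2,v_4\}$ and $\{v_1,v_2,v_5\}$ would both have to embed $v_1v_2$). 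The only thin spot in your write-up is the deferred eight-fold verification, but it does go through uniformly for the template $v_1v_2v_5v_3v_4$: take $h_1$ for $v_1v_2$ and $h_3$ for $v_3v_4$; take $h_5$ (if the $(h_2,h_5)$-pair has type $A$) or $h_2$ (type $B$) for $v_3v_5$; take $h_4$ (if $t_1=A$), $h_5$ (if $t_1=t_2=B$), or a free hyperedge (if $t_1=B$, $t_2=A$) for $v_2v_5$; and take $h_6$ (if $t_3=A$), $h_4$ (if $t_1=t_3=B$), or a free hyperedge (if $t_1=A$, $t_3=B$) for $v_4v_1$. The two ``free'' alternatives require $t_1=B$ and $t_1=A$ respectively, so they never occur simultaneously, and in every branch the five hyperedges used are pairwise distinct.
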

\begin{proof}
We know that $\cH$ contains a Berge cycle $C$ of length $6$. Let $C = v_1 v_2 \cdots v_6$ be one of such Berge cycles. 
For convenience assume $v_{i}\equiv v_{\textrm{$i$ mod $6$}}$. Again call an hyperedge $h$ free if $h$ is not a hyperedge of the Berge cycle $C$. Now for each $i\in [6]$, if any hyperedge $h_i$ containing $v_i v_{i+2}$ is free or $h_i = \{v_i, v_{i+1}, v_{i+2}\}$, then we are done since we can obtain a Berge cycle $C' = v_1 \cdots v_i v_{i+2} \cdots v_6 v_1$ of length $5$ by embedding $v_i v_{i+2}$ in $h_i$ and every other $2$-edge the same way it is embedded in $C$.
Otherwise, for each $i \in [6]$, the hyperedge $h_i$ must be either $\{v_i ,v_{i+2}, v_{i+3}\}$ or $\{v_{i-1} v_{i} v_{i+2}\}$. 

\begin{description}
\item Case 1: there is some $i$ such that both $\{v_i ,v_{i+2}, v_{i+3}\}$ and $\{v_{i+1}, v_{i+3}, v_{i+4}\}$ (or both $\{v_i, v_{i+1}, v_{i+3}\}$ and $\{v_{i-1}, v_{i}, v_{i+2}\}$) are hyperedges of $C$. WLOG, by relabeling, assume that $\{v_1, v_3, v_4\}$ and $\{v_2, v_4, v_5\}$ are both in $C$. Then consider the cycle 
$$v_1 v_3 v_2 v_5 v_6 v_1$$
such as $v_1 v_3$ is embedded in $\{v_1, v_3, v_4\}$, $v_2 v_5$ is embedded in $\{v_2, v_4, v_5\}$ and every other $2$-edge is embedded the same way in $C$. We then obtain a Berge cycle of length $5$.

\item Case 2: 
WLOG, assume that the vertex pair $v_2 v_4$ is embedded in $\{v_1, v_2, v_4\}$. Since we are not in Case $1$, then $v_3 v_5$ must be embedded in $\{v_3, v_5, v_6\}$, $v_4 v_6$ must be embedded in $\{v_3, v_4, v_6\}$, etc. With this logic, 
we then obtain a hypergraph on $6$ vertices with at least the following hyperedges: $h_1 = \{v_1, v_2, v_4\}$, $h_2 = \{v_3, v_5, v_6\}$, $h_3 = \{v_3, v_4, v_6\}$, $h_4 = \{v_1, v_2, v_5\}$, $h_5 = \{v_2, v_5, v_6\}$, $h_6 = \{v_1, v_3, v_4\}$. Now consider the cycle 
$$v_2 v_5 v_6 v_3 v_4$$
by embedding the $2$-edges in $h_4, h_5, h_2, h_3, h_1$ respectively. We then have a Berge cycle of length $5$.
\end{description}
This completes the proof of the claim.
\end{proof}

\begin{claim}\label{cl:Berge-C4}
$\cH$ contains a Berge cycle of length $4$.
\end{claim}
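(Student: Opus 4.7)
The plan is to extract a Berge $C_4$ directly from a Berge $C_6$ in $\cH$, which is already available via the trace argument invoked at the outset of the proof of Theorem~\ref{thm:3-cycle}: for any $6$-subset $S\subseteq V(\cH)$, Proposition~\ref{prop:cover} gives that $\cH_S$ is covering, Lemma~\ref{lem:3-Hamiltonian-cycle} gives a Hamiltonian Berge cycle in $\cH_S$, and Proposition~\ref{prop:cover} lifts this back to a Berge $C_6$ in $\cH$. Fix such a cycle $C = v_1 v_2 v_3 v_4 v_5 v_6$ with distinct hyperedges $e_1,\ldots,e_6$, where $e_i$ embeds $v_i v_{i+1}$ (indices mod~$6$).

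The main idea is to cut $C$ with an antipodal chord, which splits it symmetrically into two length-$3$ paths, each of which closes into a $C_4$. Since $\cH$ is covering, there exists a hyperedge $h$ with $\{v_1,v_4\}\subseteq h$. The two natural $4$-cycles through this chord are $v_1 v_4 v_5 v_6$ (using hyperedges $h, e_4, e_5, e_6$) and $v_1 v_2 v_3 v_4$ (using hyperedges $e_1, e_2, e_3, h$). Each produces a Berge $C_4$ as soon as the four hyperedges involved are pairwise distinct.

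Because $e_1,\ldots,e_6$ are distinct, $h$ coincides with at most one of them, so $h$ lies in at most one of the disjoint sets $\{e_1,e_2,e_3\}$ and $\{e_4,e_5,e_6\}$. I therefore choose whichever half of $C$ whose hyperedge-set avoids $h$: if $h\notin\{e_4,e_5,e_6\}$ I take the first $4$-cycle; otherwise $h\in\{e_4,e_5,e_6\}$ forces $h\notin\{e_1,e_2,e_3\}$ and I take the second. In either case the four hyperedges are distinct, giving a Berge $C_4$.

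I do not anticipate any real obstacle here: the argument is essentially a one-line pigeonhole on which half of $C$ the chord's hyperedge falls into. Unlike Claim~\ref{cl:Berge-C5}, where the chord $v_i v_{i+2}$ in $C_6$ provides only one closable side (leading to several cases), the antipodal chord $v_1v_4$ gives two symmetric length-$3$ halves, both of which close into a $C_4$, so a single chord and a case-split on $h$ suffice.
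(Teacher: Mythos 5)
Your proof is correct, and it takes a genuinely different and shorter route than the paper's. The paper derives the Berge $C_4$ from the Berge $C_5$ of Claim~\ref{cl:Berge-C5}, using a chord $v_iv_{i+2}$ of the $5$-cycle; since such a chord closes only one side of the cycle into a $C_4$, the paper must case-split on whether the hyperedge containing the chord is free, coincides with $\phi(v_iv_{i+1})$ or $\phi(v_{i+1}v_{i+2})$, or is used elsewhere on the cycle, and in the bad case it falls back on the Case~1 argument of Claim~\ref{cl:Berge-C5}. You instead work directly from the Berge $C_6$ (already established earlier in the proof of Theorem~\ref{thm:3-cycle}) and exploit the symmetry of the antipodal chord $v_1v_4$: both halves of the $6$-cycle close into a $C_4$, the hyperedge sets $\{e_1,e_2,e_3\}$ and $\{e_4,e_5,e_6\}$ are disjoint, and the single hyperedge $h\supseteq\{v_1,v_4\}$ can collide with at most one of them, so one of the two candidate $4$-cycles always has four pairwise distinct hyperedges. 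This eliminates the case analysis entirely and makes the claim independent of Claim~\ref{cl:Berge-C5}; the only thing the paper's route buys is uniformity of presentation with the $C_5$ argument. Note that your trick is specific to the even-length symmetric split and would not similarly simplify Claim~\ref{cl:Berge-C5} itself.
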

\begin{proof}
The proof is very similar to that of Claim \ref{cl:Berge-C5}. By Claim \ref{cl:Berge-C5}, we know that $\cH$ contains a Berge cycle $C$ of length $5$. Let $C = v_1 v_2 \cdots v_5$ be one of such Berge cycles (and assume $v_{i}\equiv v_{\textrm{$i$ mod $5$}}$). Similar to before, if there is any an hyperedge $h_i$ containing $v_i v_{i+2}$ such that $h_i$ is free or $h_i = \{v_i, v_{i+1}, v_{i+2}\}$, then we can easily find a Berge cycle of length $4$. 

Now let $h_1$ be a hyperedge containing $v_1 v_3$. If $h_1$ is free or $h_1 = \{v_1 v_2 v_3\}$, then we are done. WLOG suppose that $h_1$ is used to embed $v_3 v_4$, i.e., $h_1 = \{v_1, v_3, v_4\}$. Let $h_2$ be a hyperedge containing $v_2 v_5$. Similarly, if $h_2$ is free or $h_2 = v_1 v_2 v_5$, then we are done. WLOG, suppose $h_2$ is used to embed $v_3 v_3$, i.e., $h_2 = \{v_2, v_3, v_5\}$. We then can obtain a Berge cycle of length $4$ by the same argument in Case 1 of Claim \ref{cl:Berge-C5}. This completes the proof of Claim \ref{cl:Berge-C4}.
\end{proof}

By Lemma \ref{lem:small3}, Claim \ref{cl:Berge-C5} and Claim \ref{cl:Berge-C4}, $\cH$ also contains Berge cycles of length $3$, $4$, and $5$. This completes the proof of Theorem \ref{thm:3-cycle}.
\end{proof}

\section{Proof of Theorem \ref{thm: max-Lagrangian-cycle}}

Before we show the proof of Theorem \ref{thm: max-Lagrangian-cycle}, we need a few definitions and lemmas.
For a vector $\bm{x} = (x_1, \ldots, x_n)$ of real numbers, the \textit{support} of $\bm{x}$ is defined as $Supp(\bm{x}):= \{1 \leq i\leq n: x_i \neq 0\}$. Given a hypergraph $\cF$ and $I \subseteq V(\cF)$, let $\cF[I]$ be the \textit{induced} subhypergraph of $\cF$, i.e., $V(\cF[I])= I$ and $E(\cF[I])= \{h\in \cF: h\subseteq I\}$.

\begin{lemma}[\cite{Frankl-Rodl84}]\label{lem:Lagrangian-cover}
Let $\cF$ be a $k$-uniform hypergraph on $n$ vertices. Suppose $\bm{x} =(x_1, \ldots, x_n)$ with $x_i\geq 0$ such that $\sum_{i=1}^n x_i = 1$. Moreover, suppose that $P_{\cF}( x) = \lambda(\cF)$ and $I = Supp(x)$ is minimal. Then $\cF[I]$ is covering.
\end{lemma}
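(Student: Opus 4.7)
The proposed proof proceeds by contradiction and uses the standard mass-shifting technique of Motzkin--Straus, adapted to hypergraphs. Suppose that $\cF[I]$ is not covering, so there are vertices $i, j \in I$ such that no hyperedge of $\cF[I]$ contains both $i$ and $j$. The plan is to perturb $\bm{x}$ by shifting mass between coordinates $i$ and $j$ and derive a contradiction either with the maximality of $P_{\cF}(\bm{x}) = \lambda(\cF)$ or with the minimality of the support $I$.

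For $t \in [-x_i, x_j]$, define $\bm{x}(t)$ by $x_i(t) = x_i + t$, $x_j(t) = x_j - t$, and $x_v(t) = x_v$ for $v \neq i, j$. Then $\bm{x}(t)$ has nonnegative entries summing to $1$, so it lies in the feasible simplex. Set $f(t) := P_{\cF}(\bm{x}(t))$ and classify each edge $e \in E(\cF)$ by its intersection with $\{i, j\}$. Edges disjoint from $\{i,j\}$ contribute a constant, edges meeting $\{i,j\}$ in exactly one vertex contribute a term affine in $t$, and edges containing both $i$ and $j$ contribute a quadratic. The key observation is that any edge $e$ containing both $i$ and $j$ either lies entirely in $I$, which is ruled out by the choice of $(i,j)$, or contains some vertex $v \in V \setminus I$, in which case $x_v(t) = x_v = 0$ so that $e$ contributes nothing to $f(t)$. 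Hence $f$ is an affine function of $t$ on $[-x_i, x_j]$.

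Now two cases arise. If $f$ is non-constant, it is strictly monotonic, so some $t \neq 0$ in $[-x_i, x_j]$ yields $f(t) > f(0) = \lambda(\cF)$, contradicting the definition of $\lambda(\cF)$. If $f$ is constant, then taking $t = x_j$ gives $\bm{x}(t)$ with $x_j(t) = 0$ and $P_{\cF}(\bm{x}(t)) = \lambda(\cF)$; the support of $\bm{x}(t)$ is then a proper subset of $I$ on which $P_{\cF}$ still attains its maximum, contradicting the minimality of $I$.

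I anticipate no substantive obstacle: the entire argument rests on the linear-in-$t$ reduction, and the only delicate point is the verification that edges of $\cF$ (not merely of $\cF[I]$) containing both $i$ and $j$ contribute nothing quadratic, which is immediate once one notes that any such edge missing from $\cF[I]$ must contain a vertex $v \notin I$ with $x_v = 0$.
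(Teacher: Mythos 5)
Your proof is correct and is exactly the standard Motzkin--Straus/Frankl--R\"odl mass-shifting argument; the paper itself gives no proof, simply citing \cite{Frankl-Rodl84}, where this same argument appears. The one delicate point --- that an edge containing both $i$ and $j$ but not lying in $\cF[I]$ must contain a vertex of zero weight, so $f(t)$ is affine --- is handled correctly, and both the optimality and support-minimality contradictions are drawn properly.
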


\begin{proof}[Proof of Theorem \ref{thm: max-Lagrangian-cycle}]

Let $\cH$ be a Berge-$C_{t}$-free $k$-uniform hypergraph on $n$ vertices that achieves the maximum Lagrangian where $t \geq n_0(\{k\})$ in Theorem \ref{thm:cover-cycles}. Suppose that $\bm{x} =$ $(x_1, x_2, \ldots, x_n)$ $\in \mathbb{R}^n$ such that $x_i \geq 0$, $\sum_{i=1}^n x_i = 1$ and $P_{\cH}(\bm{x}) = \lambda(\cH)$. Further assume that $I = Supp(\bm{x})$ is minimal. By Lemma \ref{lem:Lagrangian-cover}, we have that $\cH[I]$ is covering. Since $\cH$ is Berge-$C_{t}$-free, it follows by Theorem \ref{thm:cover-cycles} that $\abs{I} \leq t-1$. Hence
\begin{align}
    \lambda(\cH) =  P_{\cH}(x) &= \dss_{\{i_1, i_2, \cdots, i_k\} \in E(\cH)} x_{i_1} x_{i_2} \cdots x_{i_k} \nonumber\\
                           & = \dss_{\substack{\{i_1, i_2, \cdots, i_k\} \in E(\cH)\\i_1, i_2, \cdots, i_k \in I}}x_{i_1} x_{i_2} \cdots x_{i_k} \nonumber \\
                           & \leq \dss_{\{i_1, i_2, \cdots, i_k\} \in \binom{I}{k}} x_{i_1} x_{i_2} \cdots x_{i_k}. \label{eq:lag}
\end{align}

Recall that $\dss_{i=1}^n x_i = 1$. Now we claim that \eqref{eq:lag} is maximized when $x_i = 1/n$ for all $i\in [n]$. Indeed, suppose otherwise that $x_s \neq x_t$ for some fixed $s\neq t$. Then consider a new vector $\bm{x'} =$ $(x'_1, x'_2, \ldots, x'_n)$ such that $x'_i = x_i$ for $i \in [n]\backslash \{s,t\}$ and $x'_s = x'_t =  \frac{1}{2}(x_s + x_t)$. It follows that 
\begin{align*}
      & \dss_{\{i_1, i_2, \cdots, i_k\} \in \binom{I}{k}} x'_{i_1}\cdots x'_{i_k} - \dss_{\{i_1, i_2, \cdots, i_k\} \in \binom{I}{k}} x_{i_1} \cdots x_{i_k} \\
     = &\dss_{i_1, i_2, \cdots i_{k-2} \in \binom{I\backslash \{s,t\}}{k-2}} x_{i_1} x_{i_2} \cdots x_{i_{k-2}} (x'_s x'_t - x_s x_t) \\
     = &\dss_{i_1, i_2, \cdots i_{k-2} \in \binom{I\backslash \{s,t\}}{k-2}} x_{i_1} x_{i_2} \cdots x_{i_{k-2}} \lp \frac{1}{4}(x_s+x_t)^2 - x_s x_t\rp\\
     > & 0.
\end{align*}
Hence it follows \eqref{eq:lag} is maximized when $x_i = 1/n$ for all $i\in [n]$. We then have 
\begin{align*}
     \lambda(\cH) =  P_{\cH}(x) 
                            & \leq \dss_{\{i_1, i_2, \cdots, i_k\} \in \binom{I}{k}} x_{i_1} x_{i_2} \cdots
                                    x_{i_k}\\
                           &\leq \frac{1}{(t-1)^{k}} \binom{t-1}{k}\\
                           &= \lambda(K^k_{t-1}).
\end{align*}
This completes the proof of Theorem \ref{thm: max-Lagrangian-cycle}.
\end{proof}

\begin{proof}[Proof of Corollary \ref{cor:3-cycle-Lagrangian} and \ref{cor:3-path-Lagrangian}]
For $k = 3$, Corollary \ref{cor:3-cycle-Lagrangian} and \ref{cor:3-path-Lagrangian} follows from Theorem \ref{thm:3-cycle} and Lemma \ref{lem:Lagrangian-cover} using the same logic as the proof of Theorem \ref{thm: max-Lagrangian-cycle}.
\end{proof}

\end{document}